\newcommand{\A}{{\mathcal A}}
\newcommand{\C}{\ensuremath{\mathbb{C}}}
\newcommand{\R}{\ensuremath{\mathbb{R}}}
\newcommand{\p}{\partial}
\newcommand{\E}{{\mathcal E}}
\newcommand{\Z}{\ensuremath{\mathbb{Z}}}
\newtheorem{lemma}{Lemma}
\newtheorem{theorem}{Theorem}
\begin{document}
\title[Star products admitting a smooth extension]
{Star products with separation of variables admitting a smooth extension}
\author[Alexander Karabegov]{Alexander Karabegov}
\address[Alexander Karabegov]{Department of Mathematics, Abilene Christian University, ACU Box 28012, Abilene, TX 79699-8012}
\email{axk02d@acu.edu}

\begin{abstract}
Given a complex manifold $M$ with an open dense subset $\Omega$ endowed with a pseudo-K\"ahler form $\omega$ which cannot be smoothly extended to a larger open subset, we consider various examples where the corresponding K\"ahler-Poisson structure and a star product with separation of variables on $(\Omega, \omega)$ admit smooth extensions to $M$. We suggest a simple criterion of the existence of a smooth extension of a star product and apply it to these examples.
\end{abstract}
\subjclass[2010]{53D55, 53D17, 53B35}
\keywords{deformation quantization with separation of variables, Levi-nondegenerate hypersurface, K\"ahler-Poisson manifolds}

\date{June 14, 2010}
\maketitle
\section{Introduction}
A formal differential star product on a Poisson manifold $(M,\{\cdot,\cdot\})$ is an associative product on the space $C^\infty(M)[[\nu]]$ of smooth complex-valued formal functions on $M$ given by the formula
\begin{equation}\label{E:star}
 f\ast g = \sum_{r \geq 0} \nu^r C_r(f,g),
\end{equation}
where $C_r$ are bidifferential operators on $M$, $C_0(f,g) = fg$ and $C_1(f,g) -C_1(g,f) = i\{f,g\}$ (see \cite{BFFLS}). It was proved by Kontsevich in \cite{K} that deformation quantizations exist on any Poisson manifold.

We will assume that the unit constant is the unity with respect to the star product: $f\ast 1 = 1\ast f = f$ for all $f \in C^\infty(M)[[\nu]]$. Given functions $f,g \in C^\infty(M)[[\nu]]$, we will denote by $L_f$ and $R_g$ the left star multiplication operator by $f$ and the right star multiplication operator by $g$, respectively, so that $f\ast g = L_f g = R_g f$. The associativity of the star product $\ast$ is equivalent to the fact that $[L_f,R_g]=0$ for all $f,g \in C^\infty(M)[[\nu]]$. A star-product on a Poisson manifold $M$ can be restricted to any open subset of $M$.

We call a Poisson tensor on a complex manifiold $M$ a K\"ahler-Poisson tensor if it is of type (1,1) with respect to the complex structure. If a K\"ahler-Poisson tensor written in local coordinates $\{z^k, \bar z^l\}$ as $g^{lk}$ is nondegenerate, its inverse is a pseudo-K\"ahler metric tensor $g_{kl}$. We call a complex manifiold $M$ endowed with a K\"ahler-Poisson tensor a K\"ahler-Poisson manifold. Any pseudo-K\"ahler manifold is a K\"ahler-Poisson manifold. In this paper we will give several examples of K\"ahler-Poisson manifolds with the K\"ahler-Poisson tensor degenerate on the complement of an open dense subset. 

A star product (\ref{E:star}) on a K\"ahler-Poisson manifold defines a deformation quantization with separation of variables if the operators $C_r$ differentiate their first argument in antiholomorphic directions and the second argument in holomorphic ones. If the unit constant is the unity with respect to the star product, the condition of separation of variables can be equivalently stated as follows: for any local holomorphic function $a$ and a local antiholomorphic function $b$ the identities $a\ast f = af$ and $f\ast b = bf$ hold. Otherwise speaking, $L_a =a$ and $R_b = b$ are pointwise multiplication operators.

It is not known whether there exists a star product with separation of variables on an arbitrary K\"ahler-Poisson manifold. However, star products with separation of variables exist on any pseudo-K\"ahler manifold $M$ (see \cite{BW}, \cite{CMP1}).

Given a star product with separation of variables $\ast$ on a K\"ahler-Poisson manifold $M$, the formal Berezin transform of the star product $\ast$ is a formal differential operator $B = 1 + \nu B_1 + \nu^2 B_2 + \ldots$ globally defined on $M$ by the condition that 
\[
     B(ab) = b \ast a
\]
for any local holomorphic function $a$ and a local antiholomorphic function $b$. A star product with separation of variables can be recovered from its Berezin transform.

A deformation quantization with separation of variables on a pseudo-K\"ahler manifold $M$ equipped with a pseudo-K\"ahler form $\omega$ is called standard if its restriction to any contractible coordinate chart $(U, \{z^k\})$ has the property that
\[
   L_{\frac{\p \Phi}{\p z^k}} = \frac{\p \Phi}{\p z^k} + \nu\frac{\p }{\p z^k} \mbox{ and } 
R_{\frac{\p \Phi}{\p \bar z^l}} = \frac{\p \Phi}{\p \bar z^l} + \nu\frac{\p }{\p \bar z^l},
\]
where $\Phi$ is a potential of the pseudo-K\"ahler form $\omega$ on $U$, i.e., $\omega = i\p \bar \p \Phi$. This property defines the standard deformation quantization with separation of variables uniquely and globally on any pseudo-K\"ahler manifold $M$ (see \cite{CMP1}).

Let $M$ be a K\"ahler-Poisson manifold $M$ such that the K\"ahler-Poisson structure on $M$ given by a tensor $g^{lk}$ is nondegenerate on a dense open subset $\Omega$ of $M$ and its inverse on $\Omega$ is a metric tensor $g_{kl}$ with the corresponding pseudo-K\"ahler form $\omega$. It was shown in \cite{LMP} that the coefficients of the operators $C_1$ and $C_2$ of the standard star product with separation of variables are polynomials in partial derivatives of $g^{lk}$, while the operator $C_3$ is the sum of an operator with the same property and the operator
\[
   S(u,v) = g_{mn}\frac{\p g^{ls}}{\p \bar z^q}\frac{\p g^{np}}{\p z^s}\frac{\p g^{qm}}{\p \bar z^t}\frac{\p g^{tk}}{\p z^p}\frac{\p u}{\p \bar z^l}\frac{\p v}{\p z^k},   
\] 
which depends on the metric tensor $g_{kl}$. It follows from this observation that a star-product with separation of variables on $(\Omega,\omega)$ does not necessarily have a smooth extension to $M$. In this paper we give examples of K\"ahler-Poisson manifolds with open dense pseudo-K\"ahler submanifolds such that the standard deformation quantization with separation of variables on these pseudo-K\"ahler submanifolds admits a smooth extension to the whole manifold.

\medskip

\noindent{\bf Acknowledgments.} The author is very grateful to the participants of the conference ``Quantization of Singular Spaces" held at Aarhus University in December 2010 for inspiring discussions.

\section {Examples of deformation quantizations with separation of variables on K\"ahler-Poisson manifolds}\label{S:examples}

In this section we will give two examples of a K\"ahler-Poisson manifold $M$ with the K\"ahler-Poisson structure which is nondegenerate on a dense open subset $\Omega$ and such that the standard deformation quantization with separation of variables on $\Omega$ admits a smooth extension to $M$.

\medskip

\noindent{\bf Example 1.}

\medskip

Let $\psi$ be a defining function of a Levi-nondegenerate hypersurface $\Sigma$ in an open set $U \subset \C^n$. This means that $\psi$ is a smooth real function on $U$ with the zero set $\Sigma$ and such that the Monge-Amp{\`e}re matrix
\begin{equation}\label{E:monge}
   \Gamma=  \left( \begin{array}{cc}
\frac{\p^2 \psi}{\p z^k \p \bar z^l} & \frac{\p\psi}{\p z^k}\\
\frac{\p \psi}{\p \bar z^l} 
 & \psi \end{array} \right)
\end{equation}
is nondegenerate at the points of $\Sigma$. Shrinking, if necessary, the neighborhood $U$ around $\Sigma$, we may assume that the matrix $\Gamma$ is nondegenerate on $U$.  On the complement $U \backslash \Sigma$ of $\Sigma$ the potential $\log |\psi|$ defines a pseudo-K\"ahler form $\omega$ whose inverse is a K\"ahler-Poisson bivector field which has a smooth extension to $U$ by zero (i.e., vanishing on $\Sigma$). The following theorem was proved in \cite{CM}: 
\begin{theorem}\label{T:cm}
  The standard star product with separation of variables on $(U \backslash \Sigma,\omega)$ admits a smooth extension to a star-product on $U$.
\end{theorem}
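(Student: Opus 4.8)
The plan is to turn the statement into a pole-cancellation problem for the coefficients $C_r$ and to resolve it using the rigid algebra that the nondegeneracy of the Monge--Amp\`ere matrix $\Gamma$ forces on the metric. Writing $\psi_k=\partial\psi/\partial z^k$ and $\psi_{\bar l}=\partial\psi/\partial\bar z^l$, the Schur complement of the corner entry $\psi$ in $\Gamma$ is $\psi_{k\bar l}-\psi^{-1}\psi_k\psi_{\bar l}=\psi\,g_{kl}$, where $g_{kl}=\partial_k\partial_{\bar l}\log|\psi|$ is the metric on $U\setminus\Sigma$. Block inversion of $\Gamma$ then shows that its $(1,1)$-block equals $g^{lk}/\psi$; calling this smooth block $H^{lk}$ we get $g^{lk}=\psi\,H^{lk}$ (which both recovers the stated extension of the K\"ahler--Poisson tensor by zero and, since $g_{mn}g^{qm}=\delta^q_n$, gives $g_{mn}H^{qm}=\psi^{-1}\delta^q_n$). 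Smoothness of the off-diagonal blocks $-\psi^{-1}H^{lk}\psi_k$ and $-\psi^{-1}\psi_{\bar q}H^{qk}$ of $\Gamma^{-1}$ yields the \emph{Monge--Amp\`ere vanishing relations}
\[
A^l:=H^{lk}\psi_k,\qquad B^k:=\psi_{\bar q}H^{qk},\qquad \rho:=H^{lk}\psi_k\psi_{\bar l}=\psi_{\bar l}A^l,
\]
each smooth and vanishing on $\Sigma$ (so divisible by $\psi$, using that $d\psi\neq0$ along $\Sigma$, which the nondegeneracy of $\Gamma$ there forces). A further consequence, via the Schur determinant $\det\Gamma=\psi^{n+1}\det(g_{kl})$, is that $\omega$ is \emph{approximately K\"ahler--Einstein}: $R_{k\bar l}=(n+1)\,g_{kl}+(\text{smooth})$.

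By the result of \cite{LMP} quoted above, $C_1$ and $C_2$ are polynomial in the derivatives of $g^{lk}$ and hence visibly smooth, and the only singular ingredient of $C_3$ is the summand $S(u,v)$, in which the inverse metric $g_{mn}\sim\psi^{-2}$ appears. I would check the cancellation there directly: expanding $\partial_c g^{ab}=\psi_c H^{ab}+\psi\,\partial_c H^{ab}$ and contracting the leading piece through $g_{mn}H^{qm}=\psi^{-1}\delta^q_n$ collapses $S$ to $\psi^{-1}\rho\,A^lB^k\,\frac{\partial u}{\partial\bar z^l}\frac{\partial v}{\partial z^k}$; as $\rho$, $A^l$ and $B^k$ each vanish on $\Sigma$, this equals $\psi^{-1}\cdot O(\psi^3)=O(\psi^2)$ and extends smoothly, while the remaining terms each carry an explicit factor $\psi$ and are disposed of by the same relations. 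This settles $C_1,C_2,C_3$ and, more importantly, exhibits the mechanism: every pole produced by the inverse metric is matched, after contraction, by a factor drawn from $A^l$, $B^k$, $\rho$ or $\psi$.

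To reach all orders I would induct on the left-multiplication operators $L_f=\sum_r\nu^rL_f^{(r)}$ produced by Karabegov's recursion. At order $\nu^r$ the defining condition $[L_f,\partial_{\bar l}\Phi+\nu\partial_{\bar l}]=0$ reads $[L_f^{(r)},\partial_{\bar l}\Phi]=-[L_f^{(r-1)},\partial_{\bar l}]$, and its principal part fixes the top-order coefficients of $L_f^{(r)}$ by inverting the mixed Hessian $g_{kl}=\partial_k\partial_{\bar l}\Phi$, i.e.\ by multiplying by the smooth $g^{lk}=\psi H^{lk}$. Thus one step consists of multiplying by a smooth tensor and differentiating the previous coefficients — smoothness-preserving operations — except that the lower-order part of $[L_f^{(r)},\partial_{\bar l}\Phi]$ reintroduces the singular higher derivatives $\partial_\alpha\partial_{\bar l}\Phi$, namely $g_{kl}$ and its derivatives. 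The inductive hypothesis I would carry is that each $L_f^{(r)}$ has smooth coefficients on $U$ and, more precisely, that every inverse-metric pole entering its construction sits inside one of the contractions $A^l$, $B^k$, $\rho$ annihilated on $\Sigma$; the approximate-Einstein identity is the structural reason this is the only pattern that can occur.

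The main obstacle is the stability of this pattern under the recursion. The inverse metric enters with a rank-one leading pole, $g_{kl}=-\psi^{-2}\psi_k\psi_{\bar l}+O(\psi^{-1})$, and the Levi-Civita connection of $\omega$ has at worst first-order poles along $\Sigma$ with residues built from $d\psi$; the delicate point is to prove that the contractions forced at each step always array these poles in the combinations $H^{lk}\psi_k$, $\psi_{\bar q}H^{qk}$ and $H^{lk}\psi_k\psi_{\bar l}$ that the first step shows to vanish on $\Sigma$, rather than in an uncancelled configuration. Establishing this uniformly in $r$ — equivalently, proving that the approximate K\"ahler--Einstein structure propagates through Karabegov's recursion — is the crux on which the extension of the star product rests.
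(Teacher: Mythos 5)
Your opening computations are sound and genuinely useful: the Schur-complement identity $\psi\, g_{kl}=\psi_{k\bar l}-\psi^{-1}\psi_k\psi_{\bar l}$, the factorization $g^{lk}=\psi H^{lk}$ with $H$ smooth, and the vanishing on $\Sigma$ of the contractions $A^l$, $B^k$, $\rho$ read off from the off-diagonal blocks of $\Gamma^{-1}$ are all correct, and the resulting cancellation in the third-order term $S(u,v)$ is a legitimate and instructive check. But this is where the argument stops being a proof. The theorem concerns all the $C_r$, and for $r\geq 4$ you offer only a proposed induction whose inductive hypothesis --- that every inverse-metric pole produced by the recursion for $L_f^{(r)}$ sits inside one of the contractions $A^l$, $B^k$, $\rho$ --- is never shown to propagate from step $r-1$ to step $r$. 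You say so yourself: ``establishing this uniformly in $r$ \ldots\ is the crux on which the extension of the star product rests.'' Naming the crux is not resolving it, and there is no evident structural reason the recursion cannot scatter the residues of $g_{kl}\sim-\psi^{-2}\psi_k\psi_{\bar l}$ into uncancelled configurations at some order; even at order $3$ your dismissal of the subleading terms (``disposed of by the same relations'') glosses over the fact that $\psi\, g_{mn}$ still carries a first-order pole whenever it is not contracted against $H$. So the proposal has a genuine gap exactly at the step that carries all the difficulty.

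The paper's proof avoids this order-by-order pole bookkeeping entirely. Its criterion (Theorem~\ref{T:ext}) reduces the problem to exhibiting, near each point of $\Sigma$, a formal frame $\{z^1,\dots,z^n,f^1,\dots,f^n\}$ whose left star-multiplication operators extend smoothly. For the standard product one has the closed formula $L_{\partial\log|\psi|/\partial z^s}=\psi^{-1}\psi_s\bigl(1+\nu\psi\psi_s^{-1}\partial/\partial z^s\bigr)$, whose inverse $Q^{-1}\circ\bigl(\psi_s^{-1}\psi\bigr)$ is manifestly smooth across $\Sigma$ to all orders in $\nu$; applying it to $1$ and to the remaining operators $L_{\partial\log|\psi|/\partial z^k}$ produces the frame, and the nondegeneracy of $\Gamma(x_0)$ enters only in checking linear independence of the differentials. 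If you wish to salvage your approach, the realistic route is to prove and invoke such a criterion (via the commutator lemma for differential operators) rather than to control the coefficients $C_r$ directly.
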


Similar statements were proved earlier by different methods in \cite{E} and \cite{LTW}.

\medskip

\noindent{\bf Example 2.}

\medskip

The following example of a K\"ahler-Poisson manifold comes from the theory of complex symmetric domains. Denote by $\E$ the set of nondegenerate complex $(p+r)\times r$ matrices with the right action of the group $GL(r,\C)$. Then $Gr(r,p+r) = \E/GL(r,\C)$ is the Grassmannian of $r$-dimensional subspaces in $\C^{p+r}$. Consider the indefinite metric
\begin{equation}\label{E:indef}
    \sum_{k=1}^p z_k\bar w_k - \sum_{k = p+1}^{p+r}z_k\bar w_k
\end{equation}
on $\C^{p+r}$. The left action of the group $U(p,r)$ on $\E$ induces an action on the Grassmannian $Gr(r,p+r)$. Let $\Omega$ be the set of points of the Grassmannian $Gr(r,p+r)$ corresponding to the subspaces of $\C^{p+r}$ such that the restriction of the indefinite metric (\ref{E:indef}) to them is nondegenerate. It is a dense open $U(p,r)$-invariant subset of $Gr(r,p+r)$. Given a matrix $A\in \E$, denote by $U_A$ and $V_A$ its blocks of size $p\times r$ (top $p$ rows) and $r\times r$ (bottom $r$ rows), respectively. Denote by $\E'$ the set of matrices $A\in \E$ such that the block $V_A$ is nondegenerate. Then ${\mathcal Z}=\E' /GL(r,\C)$ is an affine subset of $Gr(r,p+r)$ parametrized by the $p\times r$-matrices $Z$ so that the coset of $A \in \E'$ corresponds to the matrix $Z = U_A/V_A$. The elements $\{z_{k\alpha}\}, 1 \leq k \leq p, 1\leq \alpha \leq r,$ of a $p\times r$-matrix $Z$ are thus holomorphic coordinates on ${\mathcal Z}$. The set ${\mathcal Z}\cap \Omega$ is parametrized by the matrices $Z$ such that $E-Z^\dagger Z$ is nondegenerate. The pseudo-K\"ahler metric on ${\mathcal Z}\cap \Omega$ defined by the potential 
\[
\Phi(Z,Z^\dagger) = \log|\det (E-Z^\dagger Z)|
\]
extends to a $U(p,r)$-invariant pseudo-K\"ahler metric on  $\Omega$. In particular, the set $\Omega_+ \subset {\mathcal Z}\subset Gr(r,p+r)$ parametrized by the matrices $Z$ such that $E-Z^\dagger Z$ is positive definite is an open $U(p,r)$-invariant subset of $Gr(r,p+r)$. It is a bounded symmetric domain and the metric corresponding to the potential $\Phi$ is K\"ahler. The standard deformation quantization with separation of variables on $\Omega$ is $U(p,r)$-invariant. The corresponding formal Berezin transform $B = 1 + \nu B_1 + \ldots$ is a $U(p,r)$-invariant formal differential operator on $\Omega$. It is known (see \cite{H}) that all $U(p,r)$-invariant differential operators on $\Omega_+$ are induced by the elements of the center of the universal enveloping algebra of the Lie algebra $u(p,r)$. These elements induce global $U(p,r)$-invariant operators on the Grassmannian $Gr(r,p+r)$. Since the coefficients of $U(p,r)$-invariant differential operators are algebraic functions on $Gr(r,p+r)$, it implies that the formal Berezin transform $B$ and the corresponding star product smoothly extend to $Gr(r,p+r)$. The corresponding $U(p,r)$-invariant K\"ahler-Poisson bivector is also globally defined on $Gr(r,p+r)$. In the coordinates $(Z,Z^\dagger)$ it is given by the formula
\[
      i\left(\delta_{st} - \sum_{\gamma=1}^r z_{s\gamma}\bar z_{t\gamma}\right)\left(\delta_{\alpha\beta} - \sum_{k=1}^p \bar z_{k\alpha} z_{k\beta}\right)\frac{\p}{\p z_{s\beta}}\wedge\frac{\p}{\p \bar z_{t\alpha}}.
\]
On $\Omega$ it is the inverse of the pseudo-K\"ahler form corresponding to the potential $\Phi$.

\section{Smooth extensions of star products}

Given an open subset $U$ of a smooth real $n$-dimensional manifold $M$, an $n$-tuple of smooth complex-valued functions $\{f^1,\ldots,f^n\}$ on $U$ is called a frame if for each point $x\in U$ the differentials $df^1(x), \ldots df^n(x)$ form a basis of the complexified cotangent space $T^\ast_xM\otimes \C$. An $n$-tuple of smooth formal complex-valued functions $f^k = f^k_0 + \nu f^k_1 + \nu^2 f^k_2 + \ldots, 1 \leq k \leq n,$ on $U$ is called a formal frame if $\{f^1_0, \dots, f^n_0\}$ is a frame on $U$.  

\begin{lemma}\label{L:ext}
 Let $U\subset \R^n$ be an open set with a dense open subset $V\subset U$ and $\{f^1,\ldots,f^n\}$ be a frame on $U$. If $A$ is a differential operator of finite order on $V$ such that the function $A1$ and the operators $[\ldots [[A, f^{k_1}],f^{k_2}],\ldots,f^{k_N}]$ have smooth extensions to $U$ for any $N$ and any indices $k_i, 1 \leq k_i \leq n$, then the operator $A$ has a smooth extension to $U$.
\end{lemma}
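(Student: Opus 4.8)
The plan is to reconstruct the operator $A$ from the data that is assumed to extend smoothly, by expressing $A$ in terms of iterated commutators with the frame functions acting on the constant function $1$. The key observation is that a differential operator of finite order on $V$ is determined by its action on a frame in a completely explicit way, and the iterated commutators $[\ldots[[A,f^{k_1}],f^{k_2}],\ldots,f^{k_N}]$ are themselves differential operators of strictly decreasing order: each commutator with a function lowers the order by at least one. Since $A$ has finite order, say $m$, the commutators of length $N > m$ vanish identically, so only finitely many of the hypotheses are actually nontrivial.

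**First I would** set up local coordinates and write $A = \sum_{|\alpha|\le m} a_\alpha\,\partial^\alpha$ on $V$ with smooth coefficients $a_\alpha$, the goal being to show each $a_\alpha$ extends smoothly across the boundary of $V$ inside $U$. The natural device is to evaluate the iterated commutators on $1$. A direct computation shows that for a tuple of indices $(k_1,\dots,k_N)$,
\[
 [\ldots[[A,f^{k_1}],f^{k_2}],\ldots,f^{k_N}]\,1 = \sum_{|\alpha|=N} c_\alpha\, a_\alpha \cdot (\text{products of first-order derivatives of the } f^{k_i}),
\]
where only the top-order-in-$N$ terms survive because commuting with a function and then applying to $1$ kills all lower derivatives. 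Here is where the frame hypothesis enters decisively: since $\{df^1,\dots,df^n\}$ is a basis of the complexified cotangent space at every point of $U$, the matrix of first derivatives $\partial f^{k}/\partial x^j$ is invertible throughout $U$, and more generally the symmetric products of these differentials, indexed by multi-indices $\alpha$ of fixed length $N$, span the space of symmetric $N$-tensors with a transition matrix that is smooth and invertible on all of $U$.

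**The core step** is then an induction on the order, proceeding from the top order downward. For $N = m$ (the top order), the commutators of length $m$ applied to $1$ are $\C$-linear combinations of the top coefficients $\{a_\alpha : |\alpha| = m\}$ with coefficients that are smooth and invertible functions on $U$ (the products of the $\partial f^{k}/\partial x^j$). By hypothesis the left-hand sides extend smoothly to $U$; inverting the transition matrix, which is smooth on $U$ by the frame property, yields smooth extensions of every top coefficient $a_\alpha$ with $|\alpha|=m$. Having extended the top-order part, I subtract it off: the operator $A' = A - \sum_{|\alpha|=m} a_\alpha \partial^\alpha$ has order $\le m-1$, its value $A'1$ on $1$ and all its iterated commutators with the $f^k$ still extend smoothly to $U$ (since those of $A$ do, and the subtracted top-order piece now has smooth coefficients on all of $U$), and so the induction hypothesis applies to $A'$. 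Descending through orders $m-1, m-2, \dots, 1$, and finally using that $A1$ extends smoothly to pin down the order-zero coefficient, produces a smooth extension of every coefficient $a_\alpha$, hence of $A$ itself to $U$.

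**The main obstacle** I anticipate is making precise the bookkeeping in the first display: one must verify that evaluating the length-$N$ iterated commutator on $1$ isolates exactly the order-$N$ coefficients and that the resulting linear system in the $a_\alpha$ has a smooth, invertible coefficient matrix on all of $U$ rather than merely on $V$. The invertibility is guaranteed by the frame condition via the nondegeneracy of the Jacobian $(\partial f^k/\partial x^j)$, but one should confirm that symmetric products of an invertible matrix give an invertible map on symmetric tensors (equivalently, that the induced map on degree-$N$ symmetric powers is invertible), which is a standard fact about functoriality of $\mathrm{Sym}^N$; the density of $V$ in $U$ plays no role in the algebra but guarantees that a continuous (indeed smooth) extension, once shown to exist, is unique and genuinely agrees with $A$ on $V$.
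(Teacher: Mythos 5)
Your proof is correct and follows essentially the same route as the paper's: both extract the top-order (polarized principal) symbol of $A$ from the length-$m$ iterated commutators, use the frame condition to extend that symbol smoothly to $U$, subtract an operator on $U$ realizing it, and descend through the orders, with $A1$ handling order zero. (One small imprecision: your first display, asserting that only the terms with $|\alpha|=N$ survive, holds only when $N$ equals the current order of the operator --- which is the only case your argument actually uses.)
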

\begin{proof}
The lemma will be proved by induction on the order of the operator $A$. If $A$ is of order zero, it is the operator of pointwise multiplication by the function $A1$, which has a smooth extension to $U$. Assume that the statement of the lemma is true for any operator of order less than $r$ and that $A$ is of order $r$. Then for any indices $k_i, 1 \leq i \leq r$, the following identity holds:
\[
    [\ldots [[A,f^{k_1}],f^{k_2}],\ldots,f^{k_r}] = r! p\left(df^{k_1}\otimes\ldots\otimes df^{k_r}\right),
\]
where $p: (T^\ast V)^{\otimes r}\to \C$ is the (polarized) principal symbol of the operator $A$. Since the functions $\{f^i\}$ form a frame on $U$, the principal symbol of the operator $A$ has a smooth extension to $U$. One can construct an operator $B$ of order $r$ on $U$ whose principal symbol is the extension of $p$ to $U$. Now the operator $A-B$ is of order less than $r$ and satisfies the conditions of the lemma. Therefore $A-B$ has a smooth extension to $U$, whence the lemma follows.
\end{proof}

Let $M$ be a smooth real $n$-dimensional manifold with a dense open subset $\Omega$. Assume that $\pi$ is a Poisson bivector field on $\Omega$ and $\ast$ is a star product on the Poisson manifold $(\Omega,\pi)$.
\begin{theorem}\label{T:ext}
 Given a point $a \in M\backslash \Omega$ in a coordinate chart $U \subset M$, let $\{f^1,\ldots, f^n\}$ be a formal frame on $U$. If the operators of right star-multiplication $R_{f^k}, k = 1,\ldots,n$ (or the operators of left star multiplication $L_{f^k}, k = 1,\ldots,n$) on $\Omega\cap U$ can be extended to smooth formal differential operators on $U$, then the star-product $\ast$ has a smooth extension to a star-product on $U$. In particular, then $\pi$ extends to a smooth Poisson bivector field on $U$.
\end{theorem}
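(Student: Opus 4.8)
The plan is to deduce the theorem from Lemma \ref{L:ext} by an induction on the order in $\nu$. Set $V = \Omega\cap U$, which is open and dense in $U$. It suffices to show that for every smooth function $g$ on $U$ the left star-multiplication operator $L_g$, initially defined on $V$, extends to a smooth formal differential operator on $U$; this amounts to the smooth extendability of each bidifferential operator $C_r$, and hence of the product $\ast$ itself. The associativity identities and the Jacobi identity for $\pi$ hold on the dense subset $V$ and therefore persist on $U$, so that the extension is again a star product and $\pi$ extends to a smooth Poisson bivector field, $\pi$ being recovered from the antisymmetric part of $C_1$. (If instead the operators $L_{f^k}$ extend, one argues symmetrically, interchanging the roles of left and right multiplication and using $R_g$ in place of $L_g$.)

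Write $L_g = \sum_{r\geq 0}\nu^r L_g^{(r)}$ and $R_{f^k} = \sum_{r\geq 0}\nu^r R_{f^k}^{(r)}$, where each coefficient is a differential operator of finite order on $V$; by hypothesis every $R_{f^k}^{(r)}$ extends smoothly to $U$. Note that $L_g^{(0)}$ is the operator of pointwise multiplication by $g$ and that $R_{f^k}^{(0)}$ is the operator of pointwise multiplication by the leading term $f^k_0$ of the formal frame. I will prove by induction on $r$ that $L_g^{(r)}$ extends smoothly to $U$ for every smooth $g$; the case $r=0$ is immediate.

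For the inductive step I assume that $L_g^{(p)}$ extends smoothly to $U$ for all $p<r$ and all smooth $g$, and apply Lemma \ref{L:ext} to the operator $A = L_g^{(r)}$ on $V$, using the genuine frame $\{f^1_0,\ldots,f^n_0\}$ on $U$. First, $A1 = C_r(g,1) = 0$ for $r\geq 1$, since $g\ast 1 = g$, so $A1$ extends. The crucial step is to bridge the pointwise-multiplication commutators demanded by Lemma \ref{L:ext} to the star multiplications supplied by the hypothesis; I do this through the order-by-order form of associativity. Extracting the coefficient of $\nu^r$ from $[L_g,R_{f^k}]=0$ and isolating the term containing $R_{f^k}^{(0)}$, which is multiplication by $f^k_0$, gives
\[
   [L_g^{(r)},\, f^k_0] = -\sum_{\substack{p+q=r\\ q\geq 1}} [L_g^{(p)},\, R_{f^k}^{(q)}].
\]
On the right-hand side each $R_{f^k}^{(q)}$ extends smoothly by hypothesis, while every $L_g^{(p)}$ with $p=r-q<r$ extends smoothly by the inductive hypothesis; hence the first commutator $[L_g^{(r)}, f^k_0]$ extends smoothly to $U$ for each $k$.

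The remaining iterated commutators then extend for free: once an operator on $V$ admits a smooth extension to $U$, its commutator with multiplication by any $f^{k}_0$ (a smooth function on $U$) is again a smooth differential operator on $U$, and it extends the corresponding commutator on $V$ because the two agree on the dense set $V$. Iterating this observation starting from $[L_g^{(r)}, f^{k_1}_0]$, every operator $[\ldots[[L_g^{(r)}, f^{k_1}_0], f^{k_2}_0],\ldots, f^{k_N}_0]$ extends smoothly to $U$. Thus $A = L_g^{(r)}$ meets all the hypotheses of Lemma \ref{L:ext}, so it extends smoothly to $U$, completing the induction and hence the proof. The main obstacle is precisely the passage from the star multiplications $R_{f^k}$ in the hypothesis to the pointwise multiplications in Lemma \ref{L:ext}; it is resolved by the fact that the lowest-order part of $R_{f^k}$ is exactly pointwise multiplication by $f^k_0$, so that the order-$r$ associativity relation expresses the single commutator $[L_g^{(r)}, f^k_0]$ in terms of operators already controlled at lower order.
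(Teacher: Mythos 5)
Your proof is correct and follows essentially the same route as the paper: induction on the $\nu$-order of $L_g$, with Lemma~\ref{L:ext} applied to each coefficient and the commutation $[L_g,R_{f^k}]=0$ supplying the required commutators with the functions $f^k_0$. The only (harmless) difference is that you extract just the single commutator $[L_g^{(r)},f^k_0]$ from the order-$r$ part of associativity and note that the higher iterated commutators then extend automatically, whereas the paper reads off all iterated commutators at once from $[\ldots[[L_u,R_{f^{k_1}}],R_{f^{k_2}}],\ldots,R_{f^{k_N}}]=0$.
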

\begin{proof}
Let $u = u_0 + \nu u_1 +\ldots$ be a smooth formal function on $U$. The left star-multiplication operator $L_u$ on $U\cap \Omega$ commutes with the operators $R_{f^k}, k = 1,\ldots,n$.  Writing $L_u = A_0 + \nu A_1 + \dots$ and $R_{f^k} = B^k_0 + \nu B^k_1 + \ldots$, we have that the operators $A_0 = u_0$ and $B^k_r, r \geq 0$, have smooth extensions to $U$. We will prove by induction on $r$ that the operator $A_r$ has a smooth extension to $U$. This is true for $r=0$. Assume that this is true for all $r < s$. We have
\begin{equation}\label{E:ABB}
      [\ldots [[L_u,R_{f^{k_1}}],R_{f^{k_2}}], \ldots, R_{f^{k_N}}] = 0
\end{equation}
for any $N$ and indices $k_i$. Consider the coefficient at $\nu^s$ of the left-hand side of (\ref{E:ABB}). Since $B^k_0 = f^k_0$, this coefficient can be written as
\begin{equation}\label{E:Aff}
    [\ldots [[A_s,f^{k_1}_0],f^{k_2}_0], \ldots ,f^{k_N}_0]
\end{equation}
plus a sum of commutators of the operators $A_i$ and $B^k_j$ with $i < s$ and $j \leq s$ which all have smooth extensions to $U$. Thus the operator (\ref{E:Aff}) also has a smooth extension to $U$. Taking into account that $A_s1 = u_s$, we get from Lemma \ref{L:ext} that the operator $A_s$ has a smooth extension to $U$. Therefore the operator $L_u$ has a smooth extension to $U$ for any formal function $u$ on $U$. This implies that the star product $\ast$ extends to a smooth formal star-product on $U$. In particular, $\pi$ extends to a Poisson bivector field on $U$. 
\end{proof}

\section{A K\"ahler-Poisson tensor vanishing on a Levi-nondegenerate hypersurface}\label{S:niszero}

In this section we want to give yet another proof of Theorem~\ref{T:cm} from Example 1 based upon Theorem~\ref{T:ext}.

Recall that $\psi$ is a defining function of a Levi-nondegenerate hypersurface $\Sigma$ in an open set $U \subset \C^n$ and $\Gamma$ is the  Monge-Amp{\`e}re matrix (\ref{E:monge}) which we assume to be nondegenerate on $U$. Fix a point $x_0 \in \Sigma$. Then $\psi(x_0)=0$ and $(\p \psi)(x_0) \neq 0$, since the matrix $\Gamma(x_0)$ is nondegenerate. Therefore, there exists an index $s$ such that $\frac{\p \psi}{\p z^s}(x_0) \neq 0$. Denote by $V$ the neighborhood of $x_0$ within $U$ where $\frac{\p \psi}{\p z^s}$ does not vanish. We will construct formal functions $\{f^1,\ldots, f^n\}$ on $V$ such that the functions $\{z^1, \ldots, z^n, f^1,\ldots, f^n\}$ form a formal frame on a neighborhood $W \subset V$ of $x_0$ and the  operators $L_{f^k}, k = 1,\ldots,n,$ of the star product $\ast$ on $V\backslash \Sigma$ have a smooth extension to $W$ (this is trivially true for the operators $L_{z^k} = z^k$). Theorem \ref{T:ext} will then imply that the standard star product with separation of variables $\ast$ on $(U\backslash \Sigma,\omega)$ smoothly extends from $U\backslash \Sigma$ to $U$. Introduce the following invertible operator on $V$,
\[
    Q = 1 + \nu\psi \left(\frac{\p\psi}{\p z^s}\right)^{-1} \frac{\p}{\p z^s}.
\]

On $V\backslash \Sigma$ the operator
\begin{align*}
    L_{\frac{\p \log |\psi|}{\p z^s}} = \frac{\p \log |\psi|}{\p z^s} + \nu \frac{\p}{\p z^s} = \psi^{-1}\frac{\p \psi}{\p z^s} + \nu \frac{\p}{\p z^s} =  \psi^{-1}\frac{\p \psi}{\p z^s} Q
\end{align*}
is invertible. The inverse operator 
\[
   \left(L_{\frac{\p \log |\psi|}{\p z^s}}\right)^{-1} = Q^{-1}\circ \left(\left(\frac{\p\psi}{\p z^s}\right)^{-1}\psi\right)
\]
is also a left multiplication operator of the star product $\ast$ on $V\backslash \Sigma$. It admits a smooth extension to $V$ which we will denote $X^s$. Then $f^s = X^s 1$ is a smooth formal function on $V$, $f^s = f^s_0 + \nu f^s_1 + \ldots$, such that $f^s \ast \frac{\p \log |\psi|}{\p z^s} =1$ on $V\backslash \Sigma$ and
\[
    f^s_0 = \left(\frac{\p\psi}{\p z^s}\right)^{-1}\psi
\]
on $V$. For $k \neq s$ the operator
\begin{align*}
   \left(L_{\frac{\p \log |\psi|}{\p z^s}}\right)^{-1} L_{\frac{\p \log |\psi|}{\p z^k}} = 
   Q^{-1} \circ \left(\left(\frac{\p\psi}{\p z^s}\right)^{-1}\left(\frac{\p \psi}{\p z^k} + \nu \psi\frac{\p}{\p z^k}\right)\right)
\end{align*}
is a left multiplication operator of the star product $\ast$ on $V\backslash \Sigma$. It admits a smooth extension to $V$ which we will denote $X^k$. Then $f^k = X^k 1$ is a smooth formal function on $V$, $f^k = f^k_0 + \nu f^k_1 + \ldots$, and
\[
      f^k_0 = \left(\frac{\p\psi}{\p z^s}\right)^{-1}\left(\frac{\p \psi}{\p z^k}\right).
\]
We want to prove that $\{z^1, \ldots, z^n, f^1,\ldots, f^n\}$ is a formal frame on a neighborhood of $x_0$. It suffices to prove that the covectors 
\begin{equation}\label{E:covector}
   \left(\frac{\p f^k_0}{\p\bar z^1}(x_0), \ldots,\frac{\p f^k_0}{\p\bar z^n}(x_0)\right)
\end{equation}
for $k = 1, \ldots,n,$ are linearly independent. Taking into account that $\psi(x_0)=0$, we see that for $k=s$ the covector (\ref{E:covector}) is nonzero and propor\-tional to the nonzero covector
\begin{equation}\label{E:s}
  \left(\frac{\p\psi}{\p\bar z^1}(x_0), \ldots,\frac{\p\psi}{\p\bar z^n}(x_0)\right).
\end{equation}
For $k \neq s$ the covector (\ref{E:covector}) is proportional to the covector
\begin{equation}\label{E:nots}
    \left(\frac{\p^2 \psi}{\p z^k \p\bar z^1}\frac{\p\psi}{\p z^s} - \frac{\p^2 \psi}{\p z^s \p\bar z^1}\frac{\p\psi}{\p z^k}, \ldots, \frac{\p^2 \psi}{\p z^k \p\bar z^n}\frac{\p\psi}{\p z^s} - \frac{\p^2 \psi}{\p z^s \p\bar z^n}\frac{\p\psi}{\p z^k} \right)
\end{equation}
at $x_0$. It is a simple consequence of formulas (\ref{E:s}) and (\ref{E:nots}) that the linear independence of the covectors (\ref{E:covector}) for $k = 1, \ldots,n,$ is equivalent to the nondegeneracy of the matrix $\Gamma(x_0)$.

Thus we have proved Theorem \ref{T:cm} from \cite{CM} using a different approach.

\section{Roots of formal differential operators}\label{S:roots}

In the next section we will construct a family of star products with separation of variables on the complement of a Levi-nondegenerate hypersurface in an open subset of $\C^n$ which admit a smooth extension to the whole open subset. In order to use Theorem \ref{T:ext} we will have to find a root of a specific formal differential operator. To this end we will now prove several technical statements.

Let ${\mathcal X}$ denote the ring of polynomials in an infinite number of variables,
\[
    {\mathcal X} = \C [t_0,t_1,\ldots],
\]
and $\A$ denote the algebra of differential operators on ${\mathcal X}$ generated by the multiplication operators by the elements of ${\mathcal X}$ and a single differentiation operator
\[
    \delta = \sum_{k=0}^\infty t_{k+1} \frac{\p}{\p t_k}.
\]
For this operator, $\delta t_k = t_{k+1}$. Clearly, $\A$ is generated by $t_0$ and $\delta$.
Given a manifold $X$, a function $f$, and a vector field $v$ on $X$, denote by $D$ the algebra of differential operators on $X$ generated by $f$ and $v$. Then there exists a well defined surjective homomorphism $\tau: \A \to D$ such that $\tau(t_0) = f$ and $\tau(\delta) = v$.

\begin{lemma}\label{L:divisionina}
 Let $B_r$ be a differential operator in $\A$ of order not greater than $r$. Then for any natural number $N$ there exists a unique differential operator $A_r \in \A$  satisfying the equation
\begin{equation}\label{E:mainina}
     \sum_{i =0}^N  t_0^{N-i} A_r \circ t_0^i = t_0^{N(r+1)}B_r.
\end{equation}
The order of $A_r$ is not greater than $r$.
\end{lemma}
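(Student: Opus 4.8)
The plan is to induct on the order $r$, working throughout in the associated graded (symbol) calculus of $\A$. First I would record the structural facts I need. Writing every operator in the normal form $A=\sum_{k\ge 0}a_k\delta^k$ with $a_k\in{\mathcal X}$ (pushing all $\delta$'s to the right via $\delta g = g\delta + \delta(g)$), I note that, viewed as an ordinary differential operator on ${\mathcal X}=\C[t_0,t_1,\dots]$, each $\delta^k$ has order $k$, so the $\delta^k$ are left-${\mathcal X}$-linearly independent; hence the order filtration is well defined and the associated graded ring is the polynomial ring ${\mathcal X}[\bar\delta]$, in particular an integral domain. I also note that left multiplication by $t_0$ is injective on $\A$, since ${\mathcal X}$ is a domain.

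Writing $\Phi_N(A)=\sum_{i=0}^N t_0^{N-i}A\circ t_0^i$ for the linear map on the left of (\ref{E:mainina}) and using that multiplication operators have order $0$ and act by multiplication on symbols, I get that $\Phi_N$ preserves order with top-order symbol
\[
   \sigma_d(\Phi_N(A)) = (N+1)\,t_0^{N}\,\sigma_d(A),\qquad d=\mathrm{ord}(A).
\]
This immediately yields uniqueness: if $\Phi_N(C)=0$ with $C\neq 0$ of order $d$, then $(N+1)t_0^N\sigma_d(C)$ is a nonzero element of the domain ${\mathcal X}[\bar\delta]$, a contradiction. So at most one $A_r$ solves (\ref{E:mainina}), and it suffices to exhibit a solution of order $\le r$ (its order bound then follows from uniqueness).

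For existence I would induct on $r$. The case $r=0$ is direct: an order-zero operator is multiplication by some $a_0$, and (\ref{E:mainina}) reads $(N+1)t_0^N a_0 = t_0^N b_0$, so $a_0=b_0/(N+1)$. For the step, I write $B_r=\sum_{k\le r}b_k\delta^k$ and set $\hat A=\tfrac{1}{N+1}t_0^{Nr}b_r\delta^r$; by the symbol formula, $\Phi_N(\hat A)$ and $t_0^{N(r+1)}B_r$ have the same order-$r$ symbol, so $E:=t_0^{N(r+1)}B_r-\Phi_N(\hat A)$ has order $\le r-1$. The crucial point is that both terms are left-divisible by $t_0^{Nr}$: the exponent $N(r+1)=Nr+N\ge Nr$ handles the first, and each summand $t_0^{N-i}\hat A\,t_0^i$ of the second carries a left factor $t_0^{\,N-i+Nr}$ with $N-i\ge 0$. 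Cancelling $t_0^{Nr}$ (by injectivity of left multiplication by $t_0$) I write $E=t_0^{Nr}B'$ with $\mathrm{ord}(B')=\mathrm{ord}(E)\le r-1$, and apply the inductive hypothesis for order $r-1$ and the same $N$ — whose right-hand exponent is exactly $N((r-1)+1)=Nr$ — to produce a unique $A'$ of order $\le r-1$ with $\Phi_N(A')=t_0^{Nr}B'=E$. Then $A_r=\hat A+A'$ has order $\le r$ and solves (\ref{E:mainina}).

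The hard part is precisely this divisibility step that lets the induction close: because the inductive hypothesis carries the exponent $N\cdot r$ rather than $N(r+1)$, the error term left after matching the principal symbol must be shown to be left-divisible by exactly $t_0^{Nr}$, with quotient of order $\le r-1$. Everything else is bookkeeping once the order filtration and the symbol formula for $\Phi_N$ are set up; I would take some care to justify those facts in the infinitely-many-variables setting, but they follow from the standard symbol calculus on $\A$.
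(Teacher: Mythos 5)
Your proposal is correct and follows essentially the same route as the paper: induct on $r$, match the order-$r$ principal symbol of $\sum_i t_0^{N-i}A_r\circ t_0^i$ against that of $t_0^{N(r+1)}B_r$ to peel off a term $\tfrac{1}{N+1}t_0^{Nr}(\cdots)$, and observe that the remaining error is left-divisible by $t_0^{Nr}=t_0^{N((r-1)+1)}$ so the induction closes. The only cosmetic differences are that you subtract just the leading part $\tfrac{1}{N+1}t_0^{Nr}b_r\delta^r$ where the paper subtracts $\tfrac{1}{N+1}t_0^{Nr}B_r$, and that you state the uniqueness argument separately via the domain property of the symbol ring.
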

\begin{proof}
 We will prove that equation (\ref{E:mainina}) has a unique solution by induction on $r$. Comparing the principal symbols of  the operators on both sides of equation (\ref{E:mainina}) we see that the order of the operator $A_r$ must be equal to the order of $B_r$.  First consider equation (\ref{E:mainina}) with $r=0$. Both $A_0$ and $B_0$ must be multiplication operators by elements of ${\mathcal X}$ and
\[
    A_0 = \frac{1}{N+1}B_0.
\]

Denote the principal symbol of order $p$ of a differential operator $X$ of order not greater than $p$ by $\sigma_p(X)$. If $A_r$ is a solution of equation (\ref{E:mainina}), then
\[
      (N+1) t_0^N \sigma_r(A_r) = t_0^{N(r+1)}\sigma_r(B_r),
\]
which implies that
\[
      \sigma_r(A_r) = \frac{1}{N+1} t_0^{Nr}\sigma_r(B_r).
\]
Therefore the order of the operator 
\[
A_{r-1}: = A_r - \frac{1}{N+1} t_0^{Nr} B_r
\]
must be not greater than $r-1$ and $A_{r-1}$ should satisfy equation (\ref{E:mainina}) with $r$ replaced with $r-1$ and with
\[
     B_{r-1} := t_0^N B_r - \frac{1}{N+1} \sum_{i =0}^N t_0^{N-i} B_r \circ t_0^i.
\] 
It is clear that $\sigma_r(B_{r-1})=0$, whence the order of $B_{r-1}$ is not greater than 
$r -1$. By the induction principle, it implies that equation (\ref{E:mainina}) has a unique solution for any $r$.
\end{proof}

We introduce a bidegree on the algebra $\A$ such that the operator $t_0$ has the bidegree $(1,0)$ and the operator $\delta$ has the bidegree $(0,1)$. Then the operator $t_r$ has the bidegree $(1,r)$. Observe that if the operator $B_r$ from Lemma \ref{L:divisionina} is a homogeneous element of algebra $\A$ of bidegree $(q,r)$, then the solution $A_r$ of eqn. (\ref{E:mainina}) is homogeneous of bidegree $(Nr + q,r)$.  

\begin{lemma}\label{L:root}
Given the operator
\begin{equation}\label{E:S}
    S: = \sum_{k=0}^\infty \nu^k \left(t_0^{N+1} \delta\right)^k \circ  t_0^{N+1} = t_0^{N+1} + \nu t_0^{N+1} \delta \circ t_0^{N+1} + \ldots
\end{equation}
in the algebra $\A[[\nu]]$, there exists a unique operator  $A = A_0 + \nu A_1 + \ldots$ in $\A[[\nu]]$ such that $A_0 = t_0$ and 
\begin{equation}\label{E:power}
A^{N+1} = S. 
\end{equation}
\end{lemma}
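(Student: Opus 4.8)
The plan is to construct $A = \sum_{r\ge 0}\nu^r A_r$ recursively by matching the coefficients of $\nu^s$ on the two sides of the equation $A^{N+1}=S$. Expanding the power, the coefficient of $\nu^s$ in $A^{N+1}$ is the sum $\sum_{r_0+\cdots+r_N=s} A_{r_0}\cdots A_{r_N}$. Because $A_0=t_0$, the terms in which a single index equals $s$ and the rest vanish contribute exactly $\sum_{i=0}^N t_0^{N-i}\circ A_s\circ t_0^i$, while every other term is a product of operators $A_r$ with all $r<s$ and is thus already known at stage $s$. Hence the order-$\nu^s$ equation reads
\[
 \sum_{i=0}^N t_0^{N-i}\circ A_s\circ t_0^i = T_s,
\]
where for $s=0$ it degenerates to $A_0^{N+1}=t_0^{N+1}$, consistent with the prescribed $A_0=t_0$, and for $s\ge 1$ the operator $T_s$ is the $\nu^s$-coefficient of $S$ minus the sum of the already-determined products. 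This is precisely the shape treated by Lemma~\ref{L:divisionina}, provided $T_s$ can be written as $t_0^{N(s+1)}B_s$ with $B_s\in\A$ of order at most $s$.

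The key step, and the one I expect to be the main obstacle, is therefore the left-divisibility of $T_s$ by $t_0^{N(s+1)}$. I would settle this using the bidegree introduced just before the statement, carrying along the inductive claim that each $A_r$ is homogeneous of bidegree $(r(N+1)+1,\,r)$; this holds for $A_0=t_0$, of bidegree $(1,0)$. Since $t_0^{N+1}\delta$ has bidegree $(N+1,1)$, the $\nu^s$-coefficient $(t_0^{N+1}\delta)^s\circ t_0^{N+1}$ of $S$ is homogeneous of bidegree $((s+1)(N+1),\,s)$, and a one-line check shows that every product $A_{r_0}\cdots A_{r_N}$ with $r_0+\cdots+r_N=s$ carries the same bidegree. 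Consequently $T_s$ is homogeneous of bidegree $((s+1)(N+1),\,s)$.

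The divisibility then follows from a general observation in $\A$: any operator $X$ homogeneous of bidegree $(q,r)$ is left-divisible by $t_0^{q-r}$. Writing $X$ in the normal form $\sum_k P_k\circ\delta^k$ with all differentiations moved to the right, each $P_k$ is a homogeneous polynomial in the $t_j$ of $t$-degree $q$ and weight $r-k$; a monomial with $q$ factors whose indices sum to $r-k$ has at most $r-k$ factors of positive index, hence at least $q-(r-k)\ge q-r$ factors equal to $t_0$, so $t_0^{q-r}$ divides every $P_k$ and therefore $X$. Applied to $T_s$ this yields left-divisibility by $t_0^{(s+1)(N+1)-s}=t_0^{N(s+1)+1}$, which is even more than the required $t_0^{N(s+1)}$. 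The resulting quotient $B_s$ is homogeneous of bidegree $(s+1,s)$, so its order is at most $s$, exactly as Lemma~\ref{L:divisionina} demands.

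With these pieces assembled the recursion runs without friction: Lemma~\ref{L:divisionina} produces a unique $A_s$ of order at most $s$ solving the order-$\nu^s$ equation, and its bidegree remark identifies $A_s$ as homogeneous of bidegree $(Ns+(s+1),\,s)=(s(N+1)+1,\,s)$, closing the induction. Uniqueness of $A$ subject to $A_0=t_0$ is then immediate, since at every stage $T_s$, its factorization $t_0^{N(s+1)}B_s$, and finally $A_s$ are each uniquely determined by the previously constructed coefficients. I anticipate no difficulty beyond the divisibility count; everything else is a formal power-series recursion resting on the already-proved Lemma~\ref{L:divisionina}.
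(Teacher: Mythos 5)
Your proposal is correct and follows essentially the same route as the paper's proof: recursion on the $\nu$-coefficients reducing each step to Lemma~\ref{L:divisionina}, with the left-divisibility of the right-hand side by $t_0^{N(s+1)}$ established via the bidegree homogeneity $((s+1)(N+1),s)$ and the same monomial count (your general statement that bidegree $(q,r)$ forces left-divisibility by $t_0^{q-r}$ is exactly the paper's computation of $i_0$). No gaps.
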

\begin{proof}
 Equating the coefficients at $\nu^r$ of the operators on the both sides of equation (\ref{E:power}), we obtain the equation
\[
    \sum_{i_0 + \ldots i_N = r} A_{i_0}\ldots A_{i_N} = \left( t_0^{N+1}\delta\right)^r \circ t_0^{N+1} 
\]
which can be rewritten as follows:
\begin{equation}\label{E:ind}
 \sum_{i =0}^N t_0^{N-i} A_r \circ t_0^i =   \left( t_0^{N+1}\delta\right)^r \circ t_0^{N+1} - \sum_{i_0 + \ldots i_N = r, i_s < r} A_{i_0}\ldots A_{i_N}. 
\end{equation}
Notice that the right-hand side of eqn. (\ref{E:ind}) depends only on the operators $A_i$ with $i <r$.  We can find the components $A_r$ from equation (\ref{E:ind}) by induction on $r$ using Lemma \ref{L:divisionina}. Applying induction to eqn. (\ref{E:ind}), we have to show simultaneously that the bidegree of $A_r$ is $((N+1)r + 1,r)$ and that the right-hand side of  eqn. (\ref{E:ind}) can be represented in the form $t_0^{N(r+1)}B_r$ for some operator $B_r \in \A$. To justify the latter statement observe that the right-hand side of eqn. (\ref{E:ind}) being a homogeneous element of the algebra $\A$ of bidegree $((N+1)(r+1),r)$, can be written as a linear combination of operators of the form
\[
   \left(\prod_{s \geq 0}\left(t_s\right)^{i_s}\right)\delta^j, 
\]
where 
\[
  \sum_{s \geq 0} i_s = (N+1)(r+1) \mbox{ and } \sum_{s \geq 1} s i_s  + j = r.
\]
It implies that
\[
    i_0 = \left(\sum_{s \geq 2} (s - 1) i_s\right) + j + Nr + N + 1  > N(r+1),
\]
which means that any homogeneous element of the algebra $\A$ of bidegree $((N+1)(r+1),r)$ is divisible on the left by $t_0^{N(r+1)}$, which concludes the proof.
\end{proof}

We will also need the two following  lemmas.

\begin{lemma}\label{L:equalroots}
Given a nonvanishing smooth function $f$ and two formal differential operators $A = A_0 + \nu A_1 + \ldots$ and $B = B_0 + \nu B_1 + \ldots$ on a manifold $M$ such that $A_0=B_0=f$ is the pointwise multiplication operator by $f$ and $A^{N+1} = B^{N+1}$ for a nonnegative integer $N$, then $A = B$.
\end{lemma}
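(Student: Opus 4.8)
The plan is to proceed by induction on the order $r$ in the formal parameter $\nu$, showing that $A_r = B_r$ for every $r$. The base case $r = 0$ is given, since $A_0 = B_0 = f$. For the inductive step I would assume $A_i = B_i$ for all $i < r$ and extract the coefficient at $\nu^r$ from the identity $A^{N+1} = B^{N+1}$.

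First I would expand $A^{N+1} = \sum_k \nu^k \sum_{i_0 + \ldots + i_N = k} A_{i_0}\cdots A_{i_N}$ and isolate, in the coefficient at $\nu^r$, the terms in which some index equals $r$. Since all indices are nonnegative and sum to $r$, for $r \geq 1$ such a term must have exactly one index equal to $r$ and all the others equal to $0$; as $A_0 = f$ acts by pointwise multiplication, these terms contribute $\sum_{j=0}^N f^j \circ A_r \circ f^{N-j}$. Every remaining term involves only $A_0, \ldots, A_{r-1}$, and the same decomposition holds for $B$. By the inductive hypothesis the remaining terms coincide for $A$ and for $B$, so equating the coefficients at $\nu^r$ of $A^{N+1}$ and $B^{N+1}$ and setting $C := A_r - B_r$ yields
\[
   \sum_{j=0}^N f^j \circ C \circ f^{N-j} = 0.
\]

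The hard part, and really the crux of the lemma, is to deduce $C = 0$ from this relation. Here I would use a principal-symbol argument. The operator $C$ is of finite order; suppose for contradiction that $C \neq 0$ and let $p$ be its exact order, so that $\sigma_p(C) \neq 0$. Since multiplication by $f$ is an operator of order zero whose symbol is $f$, and the principal symbol is multiplicative under composition, each summand $f^j \circ C \circ f^{N-j}$ has order $p$ with order-$p$ principal symbol $f^j \sigma_p(C) f^{N-j} = f^N \sigma_p(C)$. Hence the order-$p$ principal symbol of the whole sum equals $(N+1) f^N \sigma_p(C)$, and this must vanish. Because $f$ is nonvanishing and $N+1 \geq 1$, this forces $\sigma_p(C) = 0$, contradicting the choice of $p$. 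Therefore $C = 0$, that is, $A_r = B_r$, which completes the induction and proves $A = B$.
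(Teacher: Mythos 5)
Your proposal is correct and follows essentially the same route as the paper: induction on the $\nu$-degree, reduction to the relation $\sum_{j=0}^N f^j\circ C\circ f^{N-j}=0$ for $C=A_r-B_r$, and the principal-symbol argument $(N+1)f^N\sigma_p(C)=0$ forcing $C=0$. The only cosmetic difference is that the paper extracts the $\nu^r$ coefficient by working with the truncation $X=A_0+\cdots+\nu^{r-1}A_{r-1}$ modulo $\nu^{r+1}$, whereas you isolate the relevant terms directly in the multinomial expansion.
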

\begin{proof}
Let $D$ be a differential operator  on $M$ such that 
\begin{equation}\label{E:zero}
f^N D + f^{N-1} D \circ f + \ldots + D \circ f^N =0. 
\end{equation}
Assume that $D$ is a nonzero operator of order $r$ with the nonzero principal symbol $\sigma_r(D)$. Now, the principal symbol of the left-hand side of (\ref{E:zero}) is $(N+1) f^N \sigma_r(D)=0$, whence $\sigma_r(D)=0$. This contradiction implies that $D=0$. Using this observation, we will prove by induction on $r$ that $A_r = B_r$ for all $r \geq 0$. We have that $A_0=B_0$. Given $r>0$, assume that $A_k=B_k$ for all $k <r$. Denote 
\[
X := A_0 + \nu A_1 + \ldots + \nu^{r-1} A_{r-1} = B_0 + \nu B_1 + \ldots + \nu^{r-1} B_{r-1} . 
\]
It follows from the condition $A^{N+1} = B^{N+1}$ that 
\begin{eqnarray*}
 \nu^r \left(X^N A_r + X^{N-1} A_r X + \ldots A_r X^N \right) = & \\
 \nu^r \left(X^N B_r + X^{N-1} B_r X + \ldots B_r X^N \right) &  \pmod{\nu^{r+1}}, 
\end{eqnarray*}
whence  $f^N D + f^{N-1} D \circ f + \ldots + D \circ f^N =0$ for $D = A_r - B_r$. Therefore, $A_r = B_r$, which concludes the proof. 
\end{proof}

\begin{lemma}\label{L:roots}
 Given a star product $\ast$ on a Poison manifold $M$, a nonvanishing complex-valued function $u_0$ on $M$, and a formal function $v = v_0 + \nu v_1 + \ldots$ such that $v_0 = u_0^q$ for some $q \in \Z$, there exists a unique formal function $u = u_0 + \nu u_1 + \ldots$ on $M$ such that $v = u^{\ast q}$.
\end{lemma}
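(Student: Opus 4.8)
The plan is to build $u$ coefficient by coefficient in powers of $\nu$ and to reduce the case $q<0$ to the case $q>0$ by passing to star inverses; the assertion is empty for $q=0$, so I assume $q\neq 0$. First I would record that any formal function $g=g_0+\nu g_1+\ldots$ with nonvanishing leading term $g_0$ has a unique star inverse $g^{\ast(-1)}$ with leading term $g_0^{-1}$: the operator $L_g=L_{g_0}+\nu L_{g_1}+\ldots$ has invertible leading term (pointwise multiplication by the nonvanishing $g_0$), hence is invertible in $\End(C^\infty(M))[[\nu]]$, so $g^{\ast(-1)}:=L_g^{-1}1$ is a right inverse and, the leading terms matching, a two-sided one. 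In particular $u^{\ast q}$ is a well-defined formal function with leading term $u_0^q$ for every $q\in\Z$.

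For $q>0$ I would determine $u_1,u_2,\ldots$ inductively from the equation $u^{\ast q}=v$, with $u_0$ prescribed. Since $\ast$ reduces to ordinary commutative multiplication modulo $\nu$, the only contribution to the coefficient of $\nu^r$ in $(u_0+\nu u_1+\ldots)^{\ast q}$ that is linear in $u_r$ comes from the $q$ ways of inserting a single factor $u_r$ among $q-1$ factors $u_0$, each combined by $C_0$; this contribution equals $q\,u_0^{q-1}u_r$. Every other term at order $\nu^r$ involves only $u_0,\ldots,u_{r-1}$ together with the known operators $C_1,C_2,\ldots$. Hence the $\nu^r$-component of $u^{\ast q}=v$ reads
\[
  q\,u_0^{q-1}\,u_r \;=\; v_r-P_r(u_0,\ldots,u_{r-1}),
\]
where $P_r$ is a fixed expression in the lower coefficients. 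As $u_0$ is nonvanishing and $q\neq 0$, the factor $q\,u_0^{q-1}$ is a nonvanishing function, so this equation has a unique smooth solution $u_r$. This proves existence and, since each $u_r$ is forced, uniqueness for $q>0$.

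The case $q<0$ follows by setting $p=-q>0$ and applying the previous step to produce the unique $p$-th root $\tilde u$ of $v$ with leading term $u_0^{-1}$ (note $(u_0^{-1})^p=u_0^{q}=v_0$); then $u:=\tilde u^{\ast(-1)}$ has leading term $u_0$ and satisfies $u^{\ast q}=(\tilde u^{\ast(-1)})^{\ast(-p)}=\tilde u^{\ast p}=v$, with uniqueness of $u$ inherited from uniqueness of $\tilde u$ and of star inverses. Alternatively, uniqueness for $q>0$ can be obtained conceptually: if $u^{\ast q}=\tilde u^{\ast q}$ with both leading terms equal to $u_0$, then, using $L_{f\ast g}=L_fL_g$, the operators $A=L_u$ and $B=L_{\tilde u}$ satisfy $A^{q}=L_v=B^{q}$ and $A_0=B_0=L_{u_0}$, so Lemma~\ref{L:equalroots} (with $N+1=q$) gives $L_u=L_{\tilde u}$ and hence $u=L_u1=L_{\tilde u}1=\tilde u$. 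The main obstacle is the inductive step: isolating the part linear in $u_r$ and recognizing that its coefficient is the nonvanishing function $q\,u_0^{q-1}$; once this is established, both existence and uniqueness are routine.
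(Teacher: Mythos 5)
Your proof is correct and takes essentially the same route as the paper: the same inductive determination of $u_r$ from the equation $q\,u_0^{q-1}u_r = v_r - (\text{terms in }u_0,\ldots,u_{r-1})$ for $q>0$, and the same reduction of $q<0$ to $q>0$ via star inverses (you invert $v$'s root rather than inverting $v$ first, a cosmetic difference). Your explicit justification of the star inverse via invertibility of $L_g$ fills in what the paper dismisses as ``well known,'' and the alternative uniqueness argument via Lemma~\ref{L:equalroots} is a valid bonus, but the substance of the argument is identical.
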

Here $u^{\ast q}$ is the $q$th power of $u$ with respect to the star product $\ast$.
\begin{proof}
 Assume that $q>0$. We will show by induction on $l$ the existence and uniqueness of each coefficient $u_l, l > 1$. For $f_1,\ldots f_q \in C^\infty(M)$ set
\[
      f_1 \ast \ldots \ast f_q = \sum_{r=0}^\infty \nu^r C_r(f_1, \ldots, f_q).
\]
Then, in particular,  $C_0(f_1, \ldots, f_q) = f_1 \ldots f_q .$
Equating the coefficients at $\nu^l$ of $u^{\ast q}$ and $v$ we get
\begin{equation}\label{E:qfold}
   \sum_{i_0 + \ldots + i_q = l} C_{i_0}(u_{i_1}, \ldots, u_{i_q}) = v_l.
\end{equation}
The terms containing $u_l$ on the left hand side of (\ref{E:qfold}) are
\[
   C_0(u_l, u_0,\ldots u_0) + C_0(u_0, u_l,\ldots u_0) + \ldots + C_0(u_0,\ldots, u_l) = q 
u_0^{q-1} u_l,
\]
which shows that $u_l$ is uniquely expressed through the coefficients $u_j$ for $j < l$.
The statement of the lemma is well known for $q = -1$. Assume that $q<0$.  The equation $v = u^{\ast q}$ is equivalent to $v^{\ast(-1)} = u^{\ast(-q)}$ which reduces the case of $q <0$ to the  case of $q >0$.
\end{proof}

\section{A family of  K\"ahler-Poisson tensors vanishing on a Levi-nondegenerate hypersurface}

Given an open subset $U \subset \C^n$ and a Levi-nondegenerate hypersurface $\Sigma \subset U$ with a defining function $\psi$, for each positive integer $N$ we will introduce a K\"ahler-Poisson tensor on a neighborhood $U_N$ of $\Sigma$ in $U$ vanishing on $\Sigma$ and nondegenerate on its complement $U_N \backslash \Sigma$ such that the corresponding standard deformation quantization on $U_N \backslash \Sigma$ admits a smooth extension to $U_N$.

For each nonnegative integer $N$ define a matrix
\[
   \Gamma_N=  \left( \begin{array}{cc}
\frac{\p^2 \psi}{\p z^k \p \bar z^l} & \frac{\p\psi}{\p z^k}\\
\frac{\p \psi}{\p \bar z^l} 
 & (N+1)^{-1} \psi \end{array} \right)
\]
on $U$. In particular, $\Gamma_0 = \Gamma$. Set $\Omega = U \backslash \Sigma$. For $N >0$, let $\omega_N$ be the closed $(1,1)$-form on $\Omega$ whose potential is $\Phi = \frac{1-\psi^{-N}}{N}$. Set
\[
 g_{kl} = \frac{\p^2 \Phi}{\p z^k \p \bar z^l}.
\]
\begin{lemma}\label{L:gamman}
 The form  $\omega_N$ is nondegenerate if and only if the matrix $\Gamma_N$ is nondegenerate.
\end{lemma}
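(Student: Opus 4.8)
The plan is to reduce the equivalence to a single determinant identity valid on $\Omega$. Recall that a closed $(1,1)$-form is nondegenerate at a point exactly when its coefficient matrix $(g_{kl})$ is invertible there, so it suffices to compare $\det(g_{kl})$ with $\det\Gamma_N$ pointwise on $\Omega$, where $\psi\neq 0$ and all of the quantities below are defined.

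First I would compute $g_{kl}$ explicitly. Differentiating the potential $\Phi=\frac{1-\psi^{-N}}{N}$ once gives $\frac{\p\Phi}{\p z^k}=\psi^{-N-1}\frac{\p\psi}{\p z^k}$, and differentiating again yields
\[
  g_{kl}=\frac{\p^2\Phi}{\p z^k\p\bar z^l}=\psi^{-N-1}\left(\frac{\p^2\psi}{\p z^k\p\bar z^l}-(N+1)\psi^{-1}\frac{\p\psi}{\p z^k}\frac{\p\psi}{\p\bar z^l}\right).
\]
Equivalently, $\psi^{N+1}g_{kl}=\frac{\p^2\psi}{\p z^k\p\bar z^l}-(N+1)\psi^{-1}\frac{\p\psi}{\p z^k}\frac{\p\psi}{\p\bar z^l}$, a form I would keep ready for the next step.

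Next I would apply the Schur complement formula to the block matrix $\Gamma_N$, treating its scalar lower-right entry $c=(N+1)^{-1}\psi$ (nonzero on $\Omega$) as the pivot. This gives
\[
  \det\Gamma_N=c\cdot\det\left(\frac{\p^2\psi}{\p z^k\p\bar z^l}-c^{-1}\frac{\p\psi}{\p z^k}\frac{\p\psi}{\p\bar z^l}\right).
\]
Since $c^{-1}=(N+1)\psi^{-1}$, the matrix inside the determinant is exactly $\psi^{N+1}(g_{kl})$ by the previous computation. Pulling the scalar $\psi^{N+1}$ out of the $n\times n$ determinant, I obtain
\[
  \det\Gamma_N=\tfrac{1}{N+1}\,\psi^{n(N+1)+1}\det(g_{kl}).
\]
On $\Omega$ the factor $\tfrac{1}{N+1}\psi^{n(N+1)+1}$ is nonzero, so $\det\Gamma_N$ and $\det(g_{kl})$ vanish simultaneously, which is precisely the asserted equivalence.

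I do not expect any serious obstacle here: the argument is a direct computation. The only points requiring care are the bookkeeping of the power of $\psi$ and the fact that the chosen Schur complement uses the pivot $c=(N+1)^{-1}\psi$, which is legitimate exactly because $\psi$ does not vanish on $\Omega$, the domain on which $\omega_N$ (and hence $g_{kl}$) is defined. It is worth noting that the normalization $(N+1)^{-1}\psi$ in the lower-right corner of $\Gamma_N$ is what makes the Schur complement reproduce $\psi^{N+1}g_{kl}$ on the nose, which is the reason this particular matrix $\Gamma_N$ appears in the statement.
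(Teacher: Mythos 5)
Your proposal is correct and is essentially the paper's own argument: the paper performs the same block elimination as a row reduction (subtracting $(N+1)\psi^{-1}\frac{\p\psi}{\p z^k}$ times the last row from the $k$th row) to produce the matrix $X_{kl}=\psi^{N+1}g_{kl}$ in the upper-left block, which is exactly your Schur complement with pivot $(N+1)^{-1}\psi$. Your explicit determinant identity $\det\Gamma_N=\tfrac{1}{N+1}\psi^{n(N+1)+1}\det(g_{kl})$ is a slightly more quantitative packaging of the same computation, and your restriction to $\Omega$ (where $\psi\neq 0$) matches the implicit domain of the lemma.
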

\begin{proof}
 For each $k \leq n$ multiply the last row in the matrix $\Gamma_N$ by $(N+1)\psi^{-1}\frac{\p\psi}{\p z^k}$ and subtract it from the $k$th row. The resulting matrix is
\[
\left( \begin{array}{cc}
 X_{kl} & 0\\
\frac{\p \psi}{\p \bar z^l} 
 & (N+1)^{-1} \psi \end{array} \right),
\]
where $X_{kl} = \frac{\p^2 \psi}{\p z^k \p \bar z^l} - (N+1)\psi^{-1}\frac{\p\psi}{\p z^k} \frac{\p \psi}{\p \bar z^l}$. The lemma follows from the observation that
\begin{equation}\label{E:niszero}
     g_{kl} = \psi^{-N-1}X_{kl}.
\end{equation}
\end{proof}

{\it Remark.} If $N=0$, the tensor (\ref{E:niszero}) defines the $(1,1)$-form $\omega$ with the potential $\log |\psi|$ as in Example 1.

Since $\psi$ is a defining function of the Levi-nondegenerate hypersurface $\Sigma \subset U$, the matrix $\Gamma_N$ is nondegenerate at every point of $\Sigma$. Thus, the matrix $\Gamma_N$ is nondegenerate on some neighborhood $U_N$ of $\Sigma$ in $U$
and therefore $\omega_N$ is a pseudo-K\"ahler form on $U_N \backslash \Sigma$.

Consider the inverse matrix
\[
   \Gamma_N^{-1}=  \left( \begin{array}{cc}
A^{lm} & B^l\\
C^m & D \end{array} \right)
\]
on $U_N$. A simple calculation shows that the matrix $A^{lm}$ is inverse to $X_{kl}$, which implies that the inverse $g^{lm}$ of the matrix $g_{kl}$ is 
\[
    g^{lm} = \psi^{N+1}A^{lm}. 
\]
Taking into account that the matrix $A^{lm}$ is smooth on $U_N$, we see that the K\"ahler-Poisson tensor $g^{lm}$ admits a smooth extension to $U_N$ which vanishes on $U_N \cap \Sigma$. We will prove that the standard star-product with separation of variables $\ast$ on $(U_N \backslash \Sigma, \omega_N)$ also admits a smooth extension to $U_N$. 

As in Section \ref{S:niszero}, assume that $x_0$ is an arbitrary point in $\Sigma$ and $s$ is an index such that $\frac{\p \psi}{\p z^s}\neq 0$ on some neighborhood $V \subset U_N$ of $x_0$. On $U_N \backslash \Sigma$ we have
\begin{align*}
   L_{\frac{\p \Phi}{\p z^k}} =  \frac{\p \Phi}{\p z^k} + \nu \frac{\p}{\p z^k} & = \psi^{-N-1}\frac{\p \psi}{\p z^k} + \nu \frac{\p}{\p z^k} =\\
& \psi^{-N-1}\left( \frac{\p \psi}{\p z^k} + \nu \psi^{N+1}\frac{\p}{\p z^k}\right)
\end{align*}
and the operator $L_{\frac{\p \Phi}{\p z^s}}$ is invertible on $V \backslash \Sigma$. Moreover, its inverse
\begin{equation}\label{E:opery}
    \left(L_{\frac{\p \Phi}{\p z^s}}\right)^{-1} = \left( 1 + \nu \psi^{N+1}\left(\frac{\p \psi}{\p z^s}\right)^{-1}\frac{\p}{\p z^s}\right)^{-1} \circ \left(\left(\frac{\p \psi}{\p z^s}\right)^{-1}\psi^{N+1}\right)
\end{equation}
and the operators
\begin{align}\label{E:opers}
  & \left(L_{\frac{\p \Phi}{\p z^s}}\right)^{-1}L_{\frac{\p \Phi}{\p z^k}} = \nonumber
\\ 
 & \left( 1 + \nu \psi^{N+1}  \left(\frac{\p \psi}{\p z^s}\right)^{-1}\frac{\p}{\p z^s}\right)^{-1} \circ \left(\left(\frac{\p \psi}{\p z^s}\right)^{-1}\left( \frac{\p \psi}{\p z^k} + \nu \psi^{N+1}\frac{\p}{\p z^k}\right)\right) 
\end{align}
for $k \neq s$ admit smooth extensions to $V$.

As in Section \ref{S:niszero}, we want to construct a formal frame  
\[
\{z^1, \ldots, z^n, f^1,\ldots, f^n\}
\]  
on some neighborhood $W \subset V$ of $x_0$ such that the operators $L_{f^k}, k = 1,\ldots,n,$ of the standard star product with separation of variables on $W\backslash \Sigma$ have smooth extensions to $W$. 
For $k \neq s$ denote by $X_k$ the smooth extension of the operator (\ref{E:opers}) to $V$
and set $f^k = X_k1$. Then the coefficient $f^k_0$ at the zero degree of the formal parameter $\nu$ in $f^k$ is given by the formula
\[
      f^k_0 = \left(\frac{\p\psi}{\p z^s}\right)^{-1}\left(\frac{\p \psi}{\p z^k}\right).
\]
as in Section \ref{S:niszero}.  To define the function $f^s$ in the formal frame we will show that the smooth extension of the operator (\ref {E:opery}) to $V$ has a smooth root of degree $N+1$ on a neighborhood of $x_0$. Shrinking, if necessary, the neighborhood $V$ of $x_0$, denote by $\chi$ any root of degree $N+1$ of the function $\left(\frac{\p \psi}{\p z^s}\right)^{-1}$ on $V$. According to Lemma ~\ref{L:roots}, there exists a unique formal function $u = u_0 + \nu u_1 + \ldots$ on $V\backslash \Sigma$ such that $u_0 = \chi\psi$ and  $u^{\ast(-N-1)} = \psi^{-N-1} \frac{\p \psi}{\p z^s} = \frac{\p \Phi}{\p z^s}$. Therefore,
\[
   \left(L_u \right)^{N+1} = \left(L_{\frac{\p \Phi}{\p z^s}}\right)^{-1} 
\]
on $V \backslash \Sigma$. On the other hand, the operator $L_{\frac{\p \Phi}{\p z^s}}$ can be written in the form
\[
    L_{\frac{\p \Phi}{\p z^s}} =  \psi^{-N-1}\frac{\p \psi}{\p z^s} + \nu \frac{\p}{\p z^s} = 
(\psi\chi)^{-N-1} + \nu \frac{\p}{\p z^s}
\]
on $V\backslash\Sigma$, and its inverse,
\[
   \left(L_{\frac{\p \Phi}{\p z^s}}\right)^{-1} = \sum_{k=0}^\infty \nu^k \left( (\psi\chi)^{N+1} 
\left(-\frac{\p}{\p z^s}\right) \right)^k \circ  (\psi\chi)^{N+1},
\]
has a smooth extension to $V$.
Consider a homomorphism $\tau$ from the algebra $\A$ introduced in Section \ref{S:roots} to the algebra of differential operators on $V$ such that $\tau(t_0) = \psi\chi$ and $\tau(\delta) = -\frac{\p}{\p z^s}$. Extend it to the mapping from $\A[[\nu]]$ to the algebra of formal differential operators on $V$ by $\nu$-linearity. Then
\[
     \tau(S)  = \left(L_{\frac{\p \Phi}{\p z^s}}\right)^{-1} = \left(L_u \right)^{N+1}
\]
for $S\in \A[[\nu]]$ given by formula (\ref{E:S}). According to Lemma \ref{L:root}, there exists an element $A \in \A[[\nu]]$ such that $A^{N+1} = S$ and $A = t_0 \pmod{\nu}$. Therefore,
$\tau(A) = \psi\chi \pmod{\nu}$ and
\[
     \left(\tau(A)\right)^{N+1} = \left(L_{\frac{\p \Phi}{\p z^s}}\right)^{-1} = (L_u)^{N+1}
\]
on $V \backslash\Sigma$. It follows from Lemma \ref{L:equalroots} that $\tau(A)=L_u$ on $V \backslash\Sigma$. Therefore the operator $\tau(A)$ is a smooth extension of the operator $L_u$ to $V$. Set $f^s = \tau(A)1$. The coefficient of $f^s$ at the zero degree of $\nu$ is $f^s_0 = \psi \chi$. Now, taking into account that $\psi (x_0)=0$, we get
\[
    \frac{\p f^s_0}{\p \bar z^l}(x_0) = \left( \frac{\p \psi}{\p \bar z^l}\,  \chi\right)(x_0).
\]
Since $ \chi(x_0) \neq 0$, we obtain that the covector $\bar \p f^s_0 (x_0)$ is proportional to the nonzero covector $\bar \p \psi (x_0)$ and is nonzero itself. Thus it can be proved as in Section \ref{S:niszero} that the formal functions
\begin{equation}\label{E:formfram}
\{z^1, \ldots, z^n, f^1,\ldots, f^n\}
\end{equation}
form a formal frame on a neighborhood of the point $x_0$. This formal frame satisfies the conditions of Theorem \ref{T:ext}, which concludes the proof of the following theorem:
\begin{theorem}
  Let $\psi$ be a defining function of a Levi-nondegenerate hypersurface $\Sigma$ in an open set $U \subset \C^n$ and $N$ be a natural number. Then there exists a neighborhood $U_N \subset U$ of $\Sigma$ such that the potential
\[
     \frac{1 - \psi^{-N}}{N}
\]
defines a pseudo-K\"ahler form $\omega_N$ on $U_N \backslash \Sigma$, and both the corresponding K\"ahler-Poisson structure and the standard deformation quantization with separation of variables on $(U_N \backslash \Sigma,\omega_N)$ admit smooth extensions to the neighborhood $U_N$.
\end{theorem}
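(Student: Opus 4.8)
The plan is to prove the three assertions in turn---the existence of the pseudo-K\"ahler form $\omega_N$, the smooth extension of the K\"ahler-Poisson structure, and the smooth extension of the standard star product---with almost all of the effort concentrated in the last one, which is where the new technical machinery of Section~\ref{S:roots} is needed.

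First I would fix the neighborhood $U_N$. Since $\psi$ is a defining function of a Levi-nondegenerate hypersurface, the matrix $\Gamma_N$ is nondegenerate at every point of $\Sigma$, so by continuity it is nondegenerate on a neighborhood $U_N$ of $\Sigma$ in $U$. Lemma~\ref{L:gamman} then guarantees that $\omega_N = i\p\bar\p\Phi$ is nondegenerate, hence pseudo-K\"ahler, on $U_N\backslash\Sigma$. For the Poisson structure I would invert $\Gamma_N$ and read off from the relation $g^{lm} = \psi^{N+1}A^{lm}$, with $A^{lm}$ the smooth top-left block of $\Gamma_N^{-1}$, that $g^{lm}$ extends smoothly to $U_N$ and vanishes on $U_N\cap\Sigma$.

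The heart of the argument is the smooth extension of the star product, which I would obtain from Theorem~\ref{T:ext}: it suffices to produce, near an arbitrary point $x_0\in\Sigma$, a formal frame $\{z^1,\ldots,z^n,f^1,\ldots,f^n\}$ whose left star-multiplication operators extend smoothly (the operators $L_{z^k}=z^k$ being trivial). I would choose an index $s$ with $\frac{\p\psi}{\p z^s}\neq 0$ on a neighborhood $V\subset U_N$ of $x_0$. Using the defining property $L_{\frac{\p\Phi}{\p z^k}} = \frac{\p\Phi}{\p z^k}+\nu\frac{\p}{\p z^k}$ of the standard quantization, I would check that $L_{\frac{\p\Phi}{\p z^s}}$ is invertible on $V\backslash\Sigma$ and that its inverse, together with the products $(L_{\frac{\p\Phi}{\p z^s}})^{-1}L_{\frac{\p\Phi}{\p z^k}}$ for $k\neq s$, extend smoothly to $V$. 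For $k\neq s$ this immediately produces the functions $f^k$ with leading term $(\frac{\p\psi}{\p z^s})^{-1}\frac{\p\psi}{\p z^k}$, exactly as in Section~\ref{S:niszero}.

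The genuinely new difficulty, and the step I expect to be the \emph{main obstacle}, is the construction of $f^s$: unlike the case $N=0$ of Section~\ref{S:niszero}, the inverse $(L_{\frac{\p\Phi}{\p z^s}})^{-1}$ is not itself a convenient left multiplication operator, and I would instead realize it as an $(N+1)$-st power. By Lemma~\ref{L:roots} there is a unique formal function $u$ with $u_0=\chi\psi$, where $\chi$ is a fixed $(N+1)$-st root of $(\frac{\p\psi}{\p z^s})^{-1}$, such that $u^{\ast(-N-1)}=\frac{\p\Phi}{\p z^s}$, so that $(L_u)^{N+1}=(L_{\frac{\p\Phi}{\p z^s}})^{-1}$ on $V\backslash\Sigma$. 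To extend $L_u$ smoothly I would transport the problem into the abstract algebra $\A$ of Section~\ref{S:roots}: under the homomorphism $\tau$ with $\tau(t_0)=\psi\chi$ and $\tau(\delta)=-\frac{\p}{\p z^s}$, the operator $S$ of (\ref{E:S}) maps precisely to $(L_{\frac{\p\Phi}{\p z^s}})^{-1}$, which is manifestly smooth on $V$. Lemma~\ref{L:root} supplies a root $A$ with $A^{N+1}=S$ and $A\equiv t_0\pmod{\nu}$, and Lemma~\ref{L:equalroots}, through the uniqueness of roots with prescribed leading term, identifies $\tau(A)$ with $L_u$ on $V\backslash\Sigma$; hence $f^s=\tau(A)1$ has smooth leading term $f^s_0=\psi\chi$ and $L_{f^s}$ extends smoothly. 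Finally I would verify, as in Section~\ref{S:niszero}, that the covectors $\bar\p f^k_0(x_0)$ are linearly independent---a condition equivalent to the nondegeneracy of $\Gamma_N(x_0)$---so that (\ref{E:formfram}) is a formal frame near $x_0$. Theorem~\ref{T:ext} then yields the smooth extension of $\ast$, completing the proof.
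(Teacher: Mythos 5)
Your proposal is correct and follows essentially the same route as the paper: the same choice of $U_N$ via Lemma~\ref{L:gamman}, the same formal frame built from the smooth extensions of $(L_{\frac{\p \Phi}{\p z^s}})^{-1}$ and $(L_{\frac{\p \Phi}{\p z^s}})^{-1}L_{\frac{\p \Phi}{\p z^k}}$, and the same identification $\tau(A)=L_u$ via Lemmas~\ref{L:roots}, \ref{L:root} and \ref{L:equalroots} to produce $f^s$ before invoking Theorem~\ref{T:ext}. The only cosmetic difference is that you phrase the frame condition in terms of $\Gamma_N(x_0)$ rather than $\Gamma(x_0)$, which is harmless since the two matrices coincide at points of $\Sigma$ where $\psi$ vanishes.
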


\section{A smooth extension of a star product on an open subset of a Grassmannian}

In this section we will use Theorem \ref{T:ext} to prove that the star product with separation of variables from Example 2 in Section \ref{S:examples} admits a smooth extension. Namely, let $M$ be the set of complex $p\times r$-matrices $Z =(z_{k\alpha})$ and ${\mathcal O}$ be the subset of matrices $Z\in M$ such that the matrix $\Psi=E-Z^\dagger Z$ is nondegenerate. The potential $\Phi = \log |\det \Psi|$ determines a pseudo-K\"ahler form $\omega$  on ${\mathcal O}$. 

\begin{theorem}
 The K\"ahler-Poisson structure corresponding to the pseudo-K\"ahler structure on ${\mathcal O}$ given by the form $\omega$ and the standard star product with separation of variables $\ast$ on $({\mathcal O},\omega)$ admit smooth extensions to $M$.
\end{theorem}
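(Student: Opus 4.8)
The plan is to apply Theorem~\ref{T:ext}. The Kähler--Poisson part is immediate: the bivector displayed at the end of Example~2, namely $i(\delta_{st}-\sum_\gamma z_{s\gamma}\bar z_{t\gamma})(\delta_{\alpha\beta}-\sum_k\bar z_{k\alpha}z_{k\beta})\,\partial_{z_{s\beta}}\wedge\partial_{\bar z_{t\alpha}}$, has polynomial coefficients and is therefore smooth on all of $M$, while on $\O$ it is the inverse of $\omega$; so only the star product requires work. For the star product I must exhibit, near every $x_0\in M\setminus\O$, a formal frame whose left-multiplication operators extend smoothly to a neighborhood of $x_0$.

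Write $\Psi=E-Z^\dagger Z$ and $\tilde\Psi=E-ZZ^\dagger$, so that $\det\Psi=\det\tilde\Psi$ and both vanish exactly on $M\setminus\O$. Since $\ast$ is standard, $L_{\partial\Phi/\partial z_{k\alpha}}=\partial\Phi/\partial z_{k\alpha}+\nu\,\partial/\partial z_{k\alpha}$, and a direct computation gives $\partial\Phi/\partial z_{k\alpha}=-(\Psi^{-1}Z^\dagger)_{\alpha k}$. As in Sections~\ref{S:niszero} and~\ref{S:roots}, although these operators are singular on $M\setminus\O$, a suitable inverse is smooth. Contracting with the holomorphic coordinates, set $\mathcal P_{\gamma\alpha}:=\delta_{\gamma\alpha}-\sum_k z_{k\gamma}L_{\partial\Phi/\partial z_{k\alpha}}$, an $r\times r$ matrix of left-multiplication operators; using $Z^\dagger Z=E-\Psi$ one finds $\mathcal P_{\gamma\alpha}=(\Psi^{-1})_{\alpha\gamma}-\nu\sum_k z_{k\gamma}\,\partial/\partial z_{k\alpha}$, which factors as $\mathcal P=\Psi^{-T}\circ(E-\nu\,\Psi^{T}\mathcal E)$, where $\mathcal E_{\gamma\alpha}=\sum_k z_{k\gamma}\,\partial/\partial z_{k\alpha}$. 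Hence $\mathcal P^{-1}=(E-\nu\,\Psi^{T}\mathcal E)^{-1}\circ\Psi^{T}=\big(\sum_{m\ge0}\nu^m(\Psi^{T}\mathcal E)^m\big)\circ\Psi^{T}$ is a matrix of \emph{smooth} formal differential operators on $M$ whose $\nu^0$-part is multiplication by $\Psi^{T}$. Each entry $(\mathcal P^{-1})_{\gamma\alpha}$ commutes with all right-multiplication operators, hence equals $L_{g_{\gamma\alpha}}$ for a formal function $g_{\gamma\alpha}$ with leading term $g_{\gamma\alpha,0}=\Psi_{\alpha\gamma}$, and $L_{g_{\gamma\alpha}}$ extends smoothly to $M$. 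Contracting the other index (equivalently, using $\Phi=\log|\det\tilde\Psi|$) yields in the same way left-multiplication operators that extend smoothly and whose leading functions are the entries $\tilde\Psi_{st}$.

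It remains to see that $\{z_{k\alpha}\}$ together with suitable functions among $\{g_{\gamma\alpha}\}$ and their $\tilde\Psi$-analogues form a formal frame near $x_0$. As in Section~\ref{S:niszero} this reduces to checking that the antiholomorphic differentials of the leading functions span: since $\bar\partial\Psi_{\alpha\gamma}=-\sum_k z_{k\gamma}\,d\bar z_{k\alpha}$ and $\bar\partial\tilde\Psi_{st}=-\sum_\gamma z_{s\gamma}\,d\bar z_{t\gamma}$, a short linear-algebra computation shows that the span of $\{\bar\partial\Psi_{\alpha\gamma}\}\cup\{\bar\partial\tilde\Psi_{st}\}$ at $x_0$ has dimension $pr-(p-\rho)(r-\rho)$, where $\rho=\operatorname{rank}Z(x_0)$. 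Thus at every point where $Z(x_0)$ has full rank $\min(p,r)$---in particular at the \emph{generic} boundary point, where $\Psi$ drops rank by one but $Z^\dagger Z$ stays nonsingular---these differentials span, we may extract a formal frame, and Theorem~\ref{T:ext} extends $\ast$ across such points.

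The main obstacle is the remaining, lower-dimensional set of boundary points at which $Z$ itself drops rank (for instance $Z_0=\operatorname{diag}(1,0)$ for $p=r=2$): there $(p-\rho)(r-\rho)>0$, the two families fail to span, and none of the reachable leading functions supplies the missing antiholomorphic directions. I would remove this obstacle using the $U(p,r)$-invariance of the standard star product recorded in Example~2. Each point of $M\setminus\O$ lies on a $U(p,r)$-orbit determined by the signature of the Hermitian matrix $\Psi$, and every such orbit contains a representative at which $Z$ has maximal rank $\min(p,r)$ (obtained by moving the positive eigenvalues of $\Psi$ off the value $1$). Near such a full-rank representative the argument above applies; transporting the resulting local extension by the corresponding element of $U(p,r)$---a biholomorphism preserving $\ast$---yields a smooth extension near $x_0$. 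Since smooth extendability is a local property, this finishes the proof, and the accompanying extension of the Poisson bivector guaranteed by Theorem~\ref{T:ext} agrees with the polynomial tensor above.
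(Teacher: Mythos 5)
Your reduction to Theorem~\ref{T:ext} and your first construction reproduce the first half of the paper's own argument: the matrix you call $\mathcal P$ is, up to transposition, the paper's matrix of operators $L_{\chi_{\varkappa\beta}}$, and your $\mathcal P^{-1}=\bigl(\sum_{m\ge 0}\nu^m(\Psi^{T}\mathcal E)^m\bigr)\circ\Psi^{T}$ is its $J_{\beta\varkappa}=\sum_\alpha Q_{\beta\alpha}\circ\psi_{\alpha\varkappa}$. You diverge in what you do with these operators: you stop at frame candidates with leading terms $\Psi_{\alpha\gamma}$ and $\widetilde\Psi_{st}$, observe (correctly) that their antiholomorphic differentials span only a subspace of dimension $pr-(p-\rho)(r-\rho)$, and conclude that the construction fails where $Z$ drops rank. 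The paper avoids this entirely by one further composition: it multiplies $J_{\beta\varkappa}$ on the right by $L_{u_{\varkappa k}}$, where $u_{\varkappa k}=-\p\Phi/\p z_{k\varkappa}=\sum_\alpha\chi_{\varkappa\alpha}\bar z_{k\alpha}$. The singular factor $\chi$ in $L_{u_{\varkappa k}}$ cancels against the factor $\psi_{\alpha\varkappa}$ in $J$, giving
\[
K_{\beta k}=\sum_\varkappa J_{\beta\varkappa}L_{u_{\varkappa k}}=\sum_\alpha Q_{\beta\alpha}\circ\Bigl(\bar z_{k\alpha}-\nu\sum_\gamma\psi_{\alpha\gamma}\frac{\p}{\p z_{k\gamma}}\Bigr),
\]
a matrix of left multiplication operators that is smooth on all of $M$ and whose leading functions are exactly $\bar z_{k\beta}$. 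Hence $\{z_{j\alpha},\,f^{\beta k}\}$ with $f^{\beta k}=K_{\beta k}1$ is a formal frame at \emph{every} point of $M$, and Theorem~\ref{T:ext} applies at once, with no rank stratification. So the antiholomorphic directions you declare unreachable are in fact reachable from the very ingredients you already have in hand.

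Your group-theoretic workaround for the rank-degenerate points is a genuinely different route and is probably salvageable, but as written it rests on assertions that themselves need proof: that the $U(p,r)$-orbits in $M\setminus{\mathcal O}$ are classified by the signature of $\Psi$ (this is Witt's extension theorem for possibly degenerate subspaces of a nondegenerate Hermitian space, not a triviality), that every such orbit meets the full-rank locus of the affine chart subject to the constraint $E-\Psi\ge 0$ of rank at most $p$ (your parenthetical is only a sketch, and for $p<r$ the relevant condition is $\operatorname{rank}Z=p$ rather than invertibility of $Z^\dagger Z$), and that the fractional-linear action of the transporting group element preserves the chart near $x_0$ and carries the standard star product to itself. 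Given that one extra operator composition removes the need for all of this, I would call the patch avoidable rather than wrong; but in its present form the degenerate-rank case is not actually proved.
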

\begin{proof}
 The matrix $\Psi=E-Z^\dagger Z$ has the entries
\[
\psi_{\alpha\beta} = \delta_{\alpha\beta} - \sum_{k=1}^p \bar z_{k\alpha} z_{k\beta}
\]
and is invertible on ${\mathcal O}$. Denote the inverse matrix by $X = (\chi_{\beta\gamma})$. Then
\[
   \frac{\p \Phi}{\p z_{k\varkappa}} =\frac{\p}{\p z_{k\varkappa}} \log|\det \Psi| = \sum_{\alpha,\beta}\chi_{\beta\alpha}\frac{\p \psi_{\alpha\beta}}{\p z_{k\varkappa}} = - \sum_{\alpha}\chi_{\varkappa\alpha}\bar z_{k\alpha}.
\] 
Therefore, the left multiplication operator by $u_{\varkappa k}:=\sum_{\alpha}\chi_{\varkappa\alpha}\bar z_{k\alpha}$ with respect to the star product $\ast$ is
\[
    L_{u_{\varkappa k}} = u_{\varkappa k} - \nu\frac{\p}{\p z_{k\varkappa}} = \sum_\alpha \chi_{\varkappa\alpha} \left(\bar z_{k\alpha} - \nu \sum_\gamma \psi_{\alpha\gamma}\frac{\p}{\p z_{k\gamma}} \right).
\]
Taking into account that 
\[
   \sum_k u_{\varkappa k}z_{k\beta} = \sum_{k,\alpha} \chi_{\varkappa\alpha}\bar z_{k\alpha}z_{k\beta} = 
\sum_\alpha \chi_{\varkappa\alpha}(\delta_{\alpha\beta} - \psi_{\alpha\beta}) = \chi_{\varkappa\beta} - \delta_{\varkappa\beta},
\]
we obtain that
\begin{align*}
  \sum_k L_{z_{k\beta}}L_{u_{\varkappa k}} = \sum_k \left(u_{\varkappa k}z_{k\beta} - \nu z_{k\beta}\frac{\p}{\p z_{k\varkappa}}\right) = \\
\chi_{\varkappa\beta} - \delta_{\varkappa\beta}  - \nu \sum_k z_{k\beta}\frac{\p}{\p z_{k\varkappa}}
\end{align*}
is the left multiplication operator by $\chi_{\varkappa\beta} - \delta_{\varkappa\beta}$. It follows that
\begin{align*}
    L_{\chi_{\varkappa\beta}} = \chi_{\varkappa\beta} - \nu \sum_k z_{k\beta}\frac{\p}{\p z_{k\varkappa}}=\\
 \sum_\alpha \chi_{\varkappa\alpha} \left(\delta_{\alpha\beta} - \nu \sum_{k,\lambda}\psi_{\alpha\lambda}z_{k\beta}\frac{\p}{\p z_{k\lambda}}\right).
\end{align*}
Interpreting the operators $L_{\chi_{\varkappa\beta}},\chi_{\varkappa\alpha}$ and
\begin{equation}\label{E:inv}
   \delta_{\alpha\beta} - \nu \sum_{k,\lambda}\psi_{\alpha\lambda}z_{k\beta}\frac{\p}{\p z_{k\lambda}}
\end{equation}
as matrices whose entries are formal differential operators on ${\mathcal O}$, we see that the matrix (\ref{E:inv}) has a smooth extension to $M$ and is invertible on $M$. Denote its inverse on $M$ by $Q_{\beta\alpha}$. The inverse matrix of $L_{\chi_{\varkappa\beta}}$ on ${\mathcal O}$ is
\[
    J_{\beta\varkappa}:=\sum_\alpha Q_{\beta\alpha}\circ\psi_{\alpha\varkappa}.
\]
Its entries $J_{\beta\varkappa}$ are left multiplication operators with respect to the product $\ast$.
The matrix $J_{\beta\varkappa}$ has a smooth extension to $M$. Observe that the entries of the matrix
\[
    K_{\beta k}: = \sum_\varkappa J_{\beta\varkappa} L_{u_{\varkappa k}} = \sum_\alpha Q_{\beta\alpha}\circ\left(\bar z_{k\alpha} - \nu \sum_\gamma \psi_{\alpha\gamma}\frac{\p}{\p z_{k\gamma}} \right)
\]
are also left multiplication operators with respect to the product $\ast$ and the matrix $K_{\beta k}$
also has a smooth extension to $M$. Denote by $f^{\beta k} = f^{\beta k}_0 + \nu f^{\beta k}_1 + \ldots$ the formal function on $M$ given by the formula $f^{\beta k} = K_{\beta k}1$. Since $f^{\beta k}_0 = \bar z_{k\beta}$, the functions $\{z_{j\alpha}, f^{\beta k}\}$ form a formal frame on $M$. Also, $L_{f^{\beta k}} = K_{\beta k}$ on ${\mathcal O}$. Now, it follows from Theorem \ref{T:ext} that the star product $\ast$ admits a smooth extension to $M$.
\end{proof}


\begin{thebibliography}{99}
\bibitem{BFFLS}  Bayen, F., Flato, M., Fronsdal, C., Lichnerowicz, A., and Sternheimer, D.: Deformation theory and quantization. I. Deformations of symplectic structures.  Ann. Physics {\bf 111} (1978), no. 1, 61 -- 110.
\bibitem{BW} Bordemann, M. and Waldmann, S.: A Fedosov star product of the Wick type for K\"ahler manifolds.  Lett. Math. Phys. {\bf 41} (3) (1997), 243 -- 253.
\bibitem{E} Engli\v{s}, M.: Weighted Bergman kernels and quantization,  Commun. Math. Phys. {\bf 227} (2002), 211--241.
\bibitem{H} Helgason, S.: Some Results on Invariant Differential Operators on Symmetric Spaces. Amer. J. Math. {\bf 114} (1992), 789-811.
\bibitem{CMP1} Karabegov, A.: Deformation quantizations with separation of variables on a K\"ahler manifold. Commun. Math. Phys. {\bf 180} (1996), 745--755.
\bibitem{LMP} Karabegov, A.: A covariant Poisson deformation quantization with separation of variables up to the third order.  Lett. Math. Phys. {\bf 61} (2002), 255 -- 261.
\bibitem{CMP3} Karabegov, A.: Formal symplectic groupoid of a deformation quantization. Commun. Math. Phys. {\bf 258} (2005), 223--256. 
\bibitem{CM} Karabegov, A.: Deformation quantization of a K\"ahler-Poisson structure vanishing on a Levi nondegenerate hypersurface.  Contemporary Math. {\bf 450} (2008), 163 -- 171. 
\bibitem{K} Kontsevich, M.: Deformation quantization of Poisson
manifolds, I.  Lett. Math. Phys. {\bf 66} (2003), 157 -- 216.
\bibitem{LTW} Leichtnam, E., Tang, X., and Weinstein, A.: Poisson geometry and deformation quantization near a strictly pseudoconvex boundary.  Journal of the European Mathematical Society, {\bf 9}, (2007), 681--704.
\end{thebibliography}
\end{document}